\pgfplotsset{compat=1.4}
\newcommand{\bq}{\begin{eqnarray}}
\newcommand{\eq}{\end{eqnarray}}
\renewcommand{\Pi}{\mbox{\LARGE$\pi$}}
\newcommand{\RR}{\mathbb{R}}
\newtheorem{lemma}{Lemma}
\newtheorem{corollary}{Corollary}
\newtheorem{theorem}{Theorem}
\title{Efficient quadratic penalization through the partial minimization technique}
 \author{%
    Aleksandr Y. Aravkin$^\dagger$\thanks{$^\dagger$Department of Applied Mathematics, University of Washington, Seattle, WA
(saravkin@uw.edu)}
    \and Dmitriy Drusvyatskiy$^*$\thanks{$^*$Department of Mathematics, University of Washington, Seattle, WA (ddrusv@uw.edu)}
    \and Tristan van Leeuwen$^{**}$
    \thanks{$^{**}$Department of Mathematics, Utrecht University, Utrecht, Nethelands (T.vanLeeuwen@uu.nl)}}
\begin{document}

\maketitle

\begin{abstract}
Common computational problems, such as parameter estimation in dynamic models and PDE constrained optimization, 
 require data fitting over a set of auxiliary parameters subject to physical constraints over 
an underlying state.  Naive quadratically penalized formulations, commonly used in practice, suffer from inherent ill-conditioning. %, scaling with the penalty parameter.  
We show that surprisingly the partial minimization technique regularizes the problem, making it well-conditioned. 
 This viewpoint sheds new light on variable projection techniques, as well as the penalty method for PDE constrained optimization, 
 and motivates robust extensions. 
In addition, we outline an inexact analysis, 
showing that the partial minimization  subproblem can be solved very loosely in each iteration. 
We illustrate the theory and algorithms on boundary control, optimal transport, and parameter estimation 
for robust dynamic inference. 
 \end{abstract}

%\section*{Plan}
%
%\begin{enumerate}
%\item Introduction, specify class of problems  
%\item Present differentiability theorem, and discuss corollaries for specific problem class. 
%%\item Can we say something about inexactness, and its effects?
%\item Machine learning survey  
%\item For experiments, compare with FULL solution, using Overton's code on both full and reduced objectives. (Tristan has already done). 
%\item MKL: new approach
%\item conclusions 
%\end{enumerate}

\section{Introduction}

In this work, we consider a structured class of optimization problems having the form 
\begin{equation}\label{eqn:main_stuff}
\min_{y,u}~ f(y)+g(u) \qquad \mbox{subject to} \qquad  A(u)y = q.
\end{equation}
Here, $f\colon\RR^n\to\RR$ is convex and smooth, 
$q\in\RR^n$ is a fixed vector, and $A(\cdot)$ is a smoothly varying invertible matrix. 
For now, we make no assumptions on $g\colon\RR^d\to\RR$, though in practice, it is typically either a smooth 
 or a `simple' nonsmooth function.   
%Large scale 
Optimization problems of this form often appear in PDE constrained optimization \cite{virieux2009overview,tarantola2005,AravkinFHV:2012}, Kalman filtering \cite{kalman,anderson1979optimal,aravkin2014robust}, boundary control problems \cite{Gugat2009,Philip2009}, and optimal transport \cite{Haker2004,Ambrosio2013}. Typically, $u$ encodes auxiliary variables while  $y$ encodes the state of the system; the constraint $A(u)y=q$ corresponds to a discretized PDE describing the physics. 

\begin{figure}[!h]
	\centering
				\includegraphics[scale=.4]{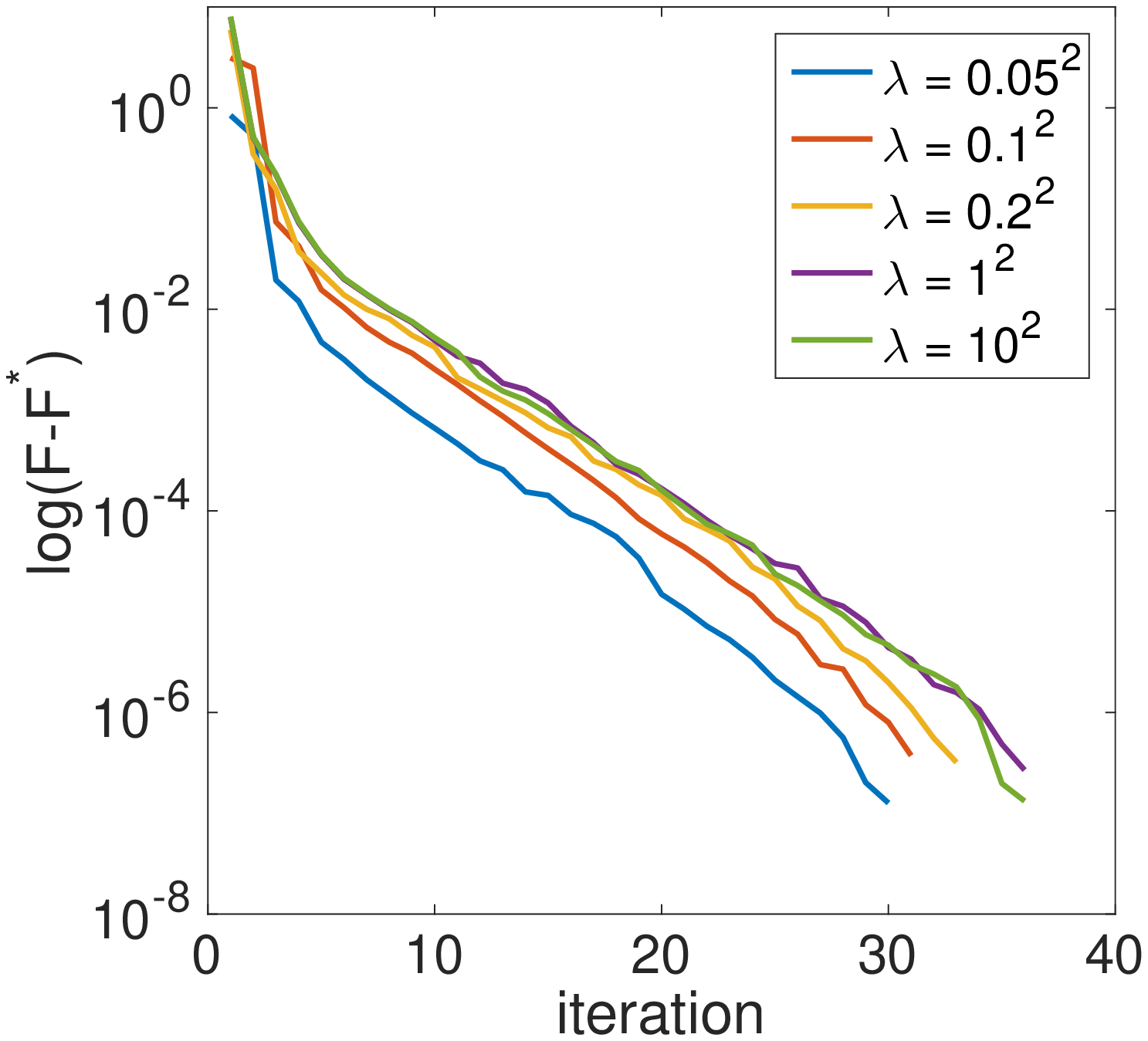}\\
				\includegraphics[scale=.4]{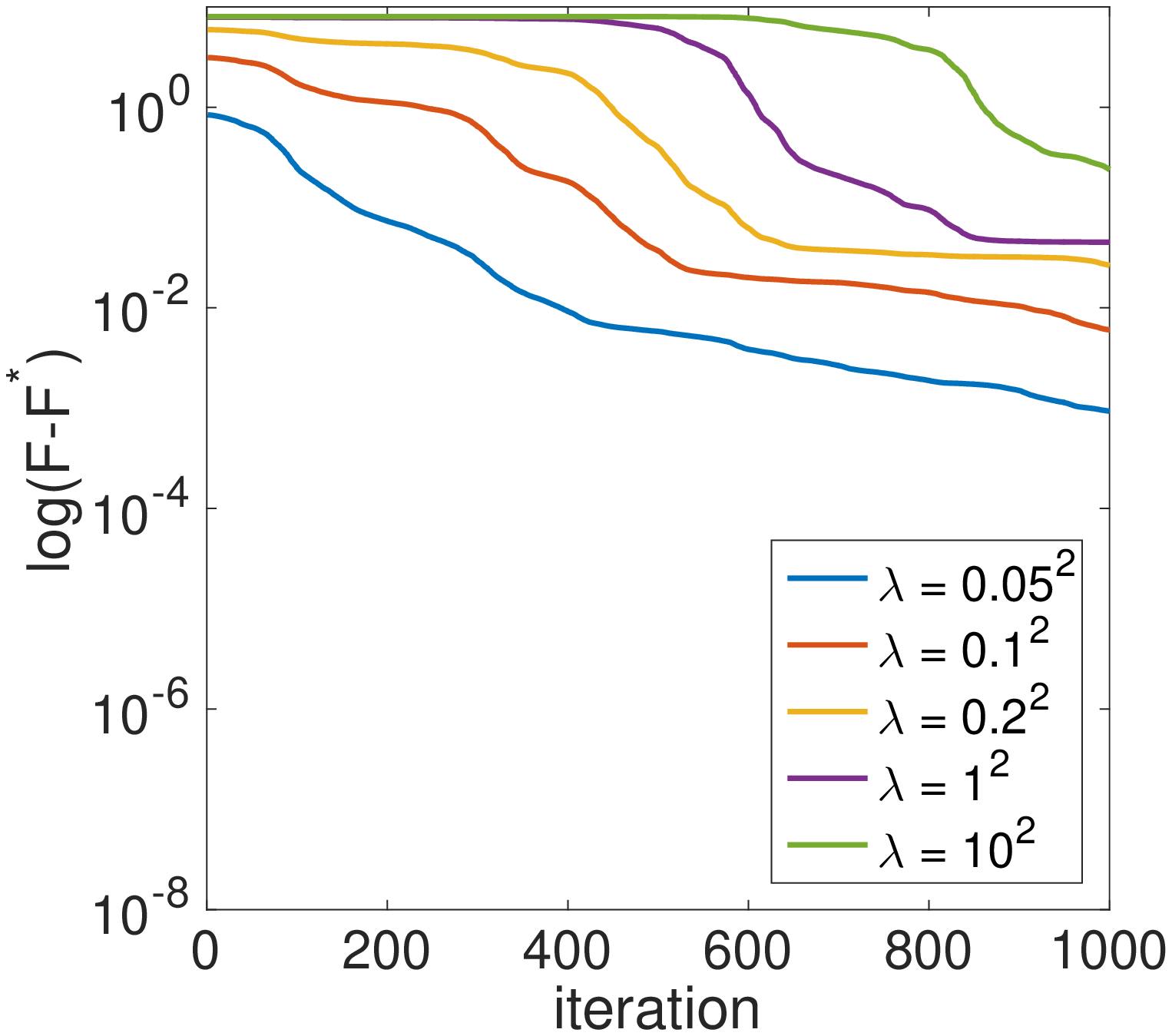}

%			\vspace{-2cm}
	\caption{   
	Top panel shows $\log(F_k - F^*)$ of L-BFGS {\bf with} partial minimization applied to~\eqref{eqn:uncon_form}  
	for the boundary control problem in Section~\ref{sec:num_ill}. Each partial minimization solves a least squares problem 
	in $y$. 
	Bottom panel shows $\log(F_k - F^*)$  of L-BFGS {\bf without} partial minimization applied to~\eqref{eqn:uncon_form}, 
	for the same values of $\lambda$.  Both methods are initialized at random $u$, and $y$ that minimizes $F(\cdot, u)$).
	Performance of L-BFGS without partial minimization degrades as $\lambda$ increases, while performance of 
	L-BFGS with partial minimization is insensitive to $\lambda$. }
	\vspace{-.3cm}
	\label{fig:boundaryComp}
\end{figure}

Since the discretization $A(u)y=q$ is already inexact, it is appealing to relax it in the formulation. 
%\textcolor{red}{Tristan, can you please add references for boundary control and optimal transport?}
A seemingly naive relaxation approach is based on the quadratic penalty:
\begin{equation}\label{eqn:uncon_form}
\min_{y,u}~ F(y,u) :=  f(y)+g(u) +\lambda\cdot \|A(u)y - q\|^2.
\end{equation}
Here $\lambda>0$ is a relaxation parameter for the equality constraints in \eqref{eqn:main_stuff}, corresponding to relaxed physics. The classical quadratic penalty method in nonlinear programming proceeds by applying an iterative optimization algorithm to the unconstrained problem \eqref{eqn:uncon_form} until some termination criterion is satisfied, then increasing $\lambda$, and repeating the procedure with the previous iterate used as a warm start. For a detailed discussion, see e.g. \cite[Section 17.1]{Noc_Wright}.
The authors of \cite{van2015penalty} observe that this strategy helps to avoid extraneous local minima, in contrast to the original formulation \eqref{eqn:main_stuff}. From this consideration alone, the formulation \eqref{eqn:uncon_form} appears to be useful.

Conventional wisdom teaches us that the quadratic penalty technique is rarely appropriate. The difficulty is that one must allow $\lambda$ to tend to infinity in order to force near-feasibility in the original problem \eqref{eqn:main_stuff}; the residual error
$\|A(u)y - q\|$ at an optimal pair $(u,y)$ for the problem $\eqref{eqn:uncon_form}$ is at best on the order of $\mathcal{O}(1/\lambda)$. Consequently, the maximal eigenvalue of the Hessian of the penalty term scales linearly with $\lambda$ and the problems \eqref{eqn:uncon_form} become computationally difficult. Indeed, 
the maximal eigenvalue of the Hessian determines the behavior of numerical methods (gradient descent, quasi-Newton) far away from the solution -- a regime in which most huge scale problems are solved. Figure~\ref{fig:boundaryComp} is a simple numerical illustration of this inherent difficulty on a boundary control problem; see Section~\ref{sec:num_ill} for more details on the problem formulation. The bottom panel in the figure tracks progress of the objective function in~\eqref{eqn:uncon_form} when an L-BFGS method is applied jointly in the variables $(u,y)$. 
After 1000 iterations of L-BFGS, the objective value significantly increases with increasing $\lambda$, while the actual minimal value of
the objective function converges to that of~\eqref{eqn:main_stuff}, and so hardly changes. In other words, performance of the method scales poorly with $\lambda$, illustrating the ill-conditioning. 

In this paper, we show that by using a simple {\em partial minimization} step, this complexity blow-up can be avoided entirely. The resulting algorithm is perfectly suited for many large-scale problems, where satisfying 
the constraint $A(u)y =  q$ to high accuracy is not required (or even possible). 
%The idea is elementary. 
The strategy is straightforward: we rewrite~\eqref{eqn:uncon_form} as 
\begin{equation}\label{eqn:reduced}
\min_{u}~\widetilde \varphi(u) + g(u),
\end{equation}
where the function 
$\widetilde \varphi(u)$ is defined implicitly by 
\begin{equation}\label{eqn:subprob_eff}
\widetilde \varphi(u)=\min_y\, \left\{f(y)  + \lambda\cdot \|A(u)y - q\|^2\right\}.
\end{equation}
We will call $\widetilde \varphi(\cdot)$ the {\em reduced function}.
Though this approach of {minimizing out} the variable $y$, sometimes called {\it variable projection}, is widely used 
(e.g. \cite{Golub2003,Aravkin2012c,van2015penalty}), little theoretical justification for its superiority is known. In this work, we show that not only does partial minimization perform well numerically for the problem class \eqref{eqn:uncon_form}, but is also theoretically grounded. We prove that surprisingly the Lipschitz constant of $\nabla\widetilde\varphi$ is bounded by a constant independent of $\lambda$. Therefore, iterative methods can be applied directly  to the formulation \eqref{eqn:reduced}. 
The performance of the new method is illustrated using a toy example (top panel of Figure~\ref{fig:boundaryComp}). We use L-BFGS to attack the outer problem; solving it within 35 iterations. The inner solver for the toy example 
simply solves the least squares problem in $y$.

The inner problem~\eqref{eqn:subprob_eff} can be solved efficiently since its condition number is nearly independent of $\lambda$. When $f$ is a convex quadratic and $A(u)$ is sparse, one can apply  sparse direct solvers or iterative methods such as  
LSQR~\cite{paige1982lsqr}. More generally, when $f$ is an arbitrary smooth convex function, one can apply first-order methods, which converge globally linearly with the rate governed by the condition number of the strongly convex objective in \eqref{eqn:subprob_eff}.  Quasi-newton methods or variants of Newton's method are also available; even if $g(u)$ is nonsmooth, BFGS methods can still be applied.

The outline of the paper is as follows.
In Section~\ref{sec:theory}, we present complexity guarantees of the partial minimization technique.
%In Section~\ref{sec:robust}, we discuss robust inverse problem formulations, and describe an efficient implementation of the variably projected penalty method. 
In  Section~\ref{sec:num_ill}, we  numerically illustrate  the overall approach  on boundary control and optimal transport problems, and on tuning an oscillator  from very noisy measurements.

%\textcolor{red}{when quadratic discuss sparse exact methods (sasha)}

%\section{Nonsmooth Variable Projection in Applications \label{sec:survey}}
%\input{survey.tex}

\section{Theory\label{sec:theory}}

%We first explain the issues with a naive optimization approach for~\eqref{eqn:uncon_form}.
%In this approach, 
In this section, we show that the proposed framework is insensitive to the parameter $\lambda$. Throughout we assume that $f$ and $A(\cdot)$ are $C^2$-smooth, $f$ is convex, and $A(u)$ is invertible for every $u\in\RR^n$. % and that $f$ is in addition convex. 
%We make no assumptions on $g$ for now. 

%\subsection{Conditioning is independent of $\lambda$}
To shorten the formulas, in this section, we will use the symbol $A_u$ instead of $A(u)$ throughout. 
Setting the stage, define the function 
\[
\varphi(u,y) = f(y) + \frac{\lambda}{2} \|A_u y-q\|^2.
\]
%Then the objective function of the penalized subproblem \eqref{eqn:uncon_form} is simply
%\[
%\varphi(u,y) = g(u) + P(y,u).
%\]
A quick computation shows
\begin{equation}
\label{eq:fullGradient}
\begin{aligned}
\nabla_y \varphi(u,y) & = \nabla f(y) + \lambda A_u^T(A_u y - q),\\
\nabla_u \varphi(u,y) &=  \lambda G(u,y)^T(A_u y - q), 
\end{aligned}
\end{equation}
where $G(u,y)$ is the Jacobian with respect to $u$ of the map $u\mapsto A_u y$. Clearly the Lipschitz constant ${\rm Lip}(\nabla \varphi)$ scales with $\lambda$. 
This can be detrimental to numerical methods. For example, basic gradient descent will find a point $(u_k, y_k)$ satisfying $\|\nabla  \varphi(u_k,y_k)\|^2<\epsilon$ after at most $\mathcal{O}\left(\frac{{\rm Lip}(\nabla \varphi)(\varphi(u_0,x_0)-\varphi^*)}{\epsilon}\right)$ iterations \cite[Section 1.2.3]{nest_lect_intro}.
%A priori, we have no information about the general behavior of $A(u)$ and $G(u,y)$, 
%and clearly the Lipschitz constant of the gradient of $\varphi$ is proportional to $\lambda$. Since in 
%practice we want to look at high lambda, this destroys the convergence rate of any first-order algorithm. 
%
%\textcolor{red}{Dima, can you give an example of a complexity bound and explain what it means?}

As discussed in the introduction, minimizing $\varphi$ amounts to the minimization problem $\min_u \widetilde \varphi(u)$ for the reduced function $\widetilde \varphi$ defined in 
 \eqref{eqn:subprob_eff}. Note since $f$ is convex and $A_u$ is invertible, the function $\varphi(u,\cdot)$ 
admits a unique minimizer, which we denote by $y_u$.  Appealing to the classical implicit function theorem (e.g. \cite[Theorem~10.58]{RTRW}) we deduce that $\widetilde \varphi$ is differentiable with 
\[
\nabla \widetilde \varphi(u) = \nabla_u \varphi(\cdot, y_u)\Big|_{u} = \lambda G(u,y_u)^T (A_u y_u - q).
\]
We aim to upper bound the Lipschitz constant of $\nabla \widetilde\varphi$ by a quantity independent of $\lambda$. We start by estimating the residual $\|A_u y_u-q\|$. 

Throughout the paper, we use the following simple identity. Given an invertible map $F\colon\RR^n\to\RR^n$ 
and invertible matrix $C$, for any points $x\in \RR^n$ and  nonzero  $\lambda\in\RR$, we have
\begin{enumerate}
\item $F^{-1}(\lambda x)=(\lambda^{-1}F)^{-1}(x),$ and
\item $C\circ F^{-1} \circ C^T = \left(C^{-T}\circ F \circ C^{-1}\right)^{-1}$.
\end{enumerate}
We often apply this observation to the invertible map $F(x) = \nabla f(x) + Bx$, where $B$ is a positive definite matrix.

\begin{lemma}[\label{lem:resBound}Residual bound] For any point $u$, the inequality holds:
\[
\|A_uy_u - q\| \leq \frac{\|\nabla (f\circ A^{-1}_u)(q)\|}{\lambda}.
\]
\end{lemma}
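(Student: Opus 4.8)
My plan is to write down the optimality condition for the inner minimization, manipulate it into a form where the simple identity $F^{-1}(\lambda x) = (\lambda^{-1} F)^{-1}(x)$ applies, and then use convexity (monotonicity of $\nabla f$) to extract the $1/\lambda$ decay. First I would note that $y_u$ is characterized by the stationarity equation $\nabla_y \varphi(u, y_u) = 0$, i.e.
\[
\nabla f(y_u) + \lambda A_u^T(A_u y_u - q) = 0.
\]
Setting $z_u := A_u y_u$, so that $y_u = A_u^{-1} z_u$, this becomes $\nabla f(A_u^{-1} z_u) + \lambda A_u^T(z_u - q) = 0$. The quantity I want to bound is $\|z_u - q\|$.

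Next I would change variables to center things at $q$: write $z_u = q + r$ where $r = z_u - q$ is the residual I am after. The aim is to exhibit $r$ as $H^{-1}(0)$-type object for a strongly monotone map, or more precisely to compare $r$ against the value at $\lambda = \infty$ (which forces $r = 0$). Concretely, consider the map
\[
F(x) := \nabla (f \circ A_u^{-1})(x) = A_u^{-T}\,\nabla f(A_u^{-1} x),
\]
which is the gradient of the convex function $f \circ A_u^{-1}$, hence monotone. Multiplying the stationarity equation on the left by $A_u^{-T}$ gives $F(z_u) + \lambda(z_u - q) = 0$, i.e. $z_u$ solves $F(x) + \lambda(x - q) = 0$. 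Rearranging, $z_u = (F/\lambda + I)^{-1}(q)$ after a shift, and the distance $\|z_u - q\|$ is controlled by how far $q$ is from being a zero of $F$, damped by $\lambda$: taking the inner product of $F(z_u) + \lambda(z_u - q) = 0$ with $z_u - q$ and using monotonicity $\langle F(z_u) - F(q), z_u - q\rangle \ge 0$ yields
\[
\lambda \|z_u - q\|^2 \le -\langle F(q), z_u - q\rangle \le \|F(q)\|\,\|z_u - q\|,
\]
so $\|z_u - q\| \le \|F(q)\|/\lambda = \|\nabla(f\circ A_u^{-1})(q)\|/\lambda$, which is exactly the claimed bound.

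The step I expect to require the most care is making the variable change $z = A_u y$ cleanly and verifying that $\nabla(f\circ A_u^{-1})(x) = A_u^{-T}\nabla f(A_u^{-1}x)$ (chain rule with a linear map) is applied in the right orientation — the transpose placement is where sign/orientation errors creep in, and it is precisely what connects the $A_u^T(A_u y_u - q)$ term in \eqref{eq:fullGradient} to the gradient of the composite function. The monotonicity inequality and the Cauchy–Schwarz bound are then routine. I should also double-check whether the lemma's stated constant matches the $\tfrac{\lambda}{2}$ versus $\lambda$ convention for the penalty term; with $\varphi(u,y) = f(y) + \tfrac{\lambda}{2}\|A_u y - q\|^2$ the stationarity equation is $\nabla f(y_u) + \lambda A_u^T(A_u y_u - q) = 0$, which is what I used, so the bound comes out as stated.
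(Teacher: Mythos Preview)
Your argument is correct, but it differs from the paper's in a meaningful way. The paper first expresses the residual explicitly as $A_u y_u - q = (F^{-1}-I)q$ for the map $F := \tfrac{1}{\lambda}A_u^{-T}\circ\nabla f\circ A_u^{-1} + I$, then bounds $\|F^{-1}(q)-F^{-1}(F(q))\|$ by showing $F^{-1}$ is $1$-Lipschitz via the inverse function theorem and positive semidefiniteness of $\nabla^2(f\circ A_u^{-1})$. You instead take the inner product of the transformed stationarity equation $F(z_u)+\lambda(z_u-q)=0$ (here $F=\nabla(f\circ A_u^{-1})$) with $z_u-q$ and invoke monotonicity of $F$ plus Cauchy--Schwarz. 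Your route is shorter and more elementary: it avoids the inverse function theorem entirely and uses only $C^1$-smoothness and convexity of $f$, whereas the paper's argument needs the Hessian. The paper's approach, on the other hand, produces an explicit operator representation of the residual that fits the algebraic ``simple identity'' framework set up just before the lemma; beyond that stylistic consistency it does not yield anything your argument misses for the purposes of this bound.
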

\begin{proof}
	Note that the minimizers $y_u$ of $\varphi(u,\cdot)$ are characterized by first order optimality conditions 
\begin{equation}\label{eqn:yucar}
0=\nabla f(y)+\lambda\cdot A_u^T(A_u y-q).
\end{equation}
Applying the implicit function theorem, we deduce that $y_u$ depends $C^2$-smoothly on $u$ with $\nabla_u y_u$
given by 
\begin{equation*}
%\label{eq:Lipyu}
 -\left(\nabla^2 f(y_u) + \lambda A_u^TA_u\right)^{-1}\nabla_u \left( \lambda A(\cdot)^T
\left(A(\cdot) y_u - q\right)\right)(u).
\end{equation*}
On the other hand, from the equality \eqref{eqn:yucar} we have
\begin{equation*}
\label{eq:yu}
\begin{aligned}
y_u & =(\nabla f+\lambda A_u^TA_u)^{-1}(\lambda A_u^Tq)
 = \left(\frac{\nabla f}{\lambda}+A_u^TA_u\right)^{-1}A_u^Tq.
%= \frac{1}{\lambda}A(u)^{-1}\left(I + \frac{1}{\lambda}A(u)^{-T} \nabla f A(u)^{-1}\right)^{-1}q
\end{aligned}
\end{equation*}
Therefore, we deduce
\begin{equation*}
\begin{aligned}%\label{eqn:main_res}
A_u y_u - q &= A_u\left(\frac{1}{\lambda} \nabla f + A_u^TA_u\right)^{-1}A_u^Tq -q\\
  &= \left(\left(\frac{1}{\lambda} A^{-T}_u \circ \nabla f \circ A^{-1}_u + I\right)^{-1} - I\right)q.
\end{aligned}
\end{equation*}
Define now the operator 
$$F:=\frac{1}{\lambda} A^{-T}_u \circ \nabla f \circ A^{-1}_u + I$$
and the point $z := F(q)$. Note that $$F(x)-x=\frac{1}{\lambda}\nabla (f\circ A^{-1}_u)(x).$$
Letting  $L$ be a Lipschitz constant of $F^{-1}$, we obtain 
\[
\begin{aligned}
\|A_uy_u - q\|&=\|F^{-1}(q)-F^{-1}(z)\| \leq L \|q-z\| \\
&= L \|q - F(q)\|  = \frac{L}{\lambda} \|A^{-T}_u\nabla f(A^{-1}_uq)\|.
\end{aligned}
\]
Now the inverse function theorem yields for any point $y$ the inequality
\[
\begin{aligned}
\|\nabla F^{-1}(y)\|&= \|\nabla F(F^{-1}(y))^{-1}\| \\
&=\left\|\left(\frac{1}{\lambda} \nabla^2 (f\circ A_u^{-1})(F^{-1}(y))+I\right)^{-1}\right\| \leq 1, 
\end{aligned}
\]
where the last inequality follows from the fact that by convexity of $f\circ A_u^{-1}$ all eigenvalues of $\nabla^2 (f\circ A_u^{-1})$ are nonnegative. Thus we may set $L=1$, completing the proof.
\end{proof}

For ease of reference, we record the following direct corollary.
\begin{corollary}
For any point $u$, we have
\[
\|y_u - A^{-1}_uq\|  \leq \|A^{-1}_u\| \|A_u y_u - q\| \leq \frac{\|A^{-1}_u\|\|\nabla (f\circ A^{-1}_u)(q)\|}{\lambda}.
\]
\end{corollary}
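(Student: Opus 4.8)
The plan is to observe that this is an immediate consequence of Lemma~\ref{lem:resBound} together with one elementary manipulation. First I would factor the error in $y$ through the residual: since $A_u$ is invertible by assumption, we can write
\[
y_u - A^{-1}_u q = A^{-1}_u\left(A_u y_u - q\right).
\]
Taking norms and using submultiplicativity of the operator norm, $\|A^{-1}_u(A_u y_u - q)\| \le \|A^{-1}_u\|\,\|A_u y_u - q\|$, which gives the first inequality.

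For the second inequality, I would simply multiply the conclusion of Lemma~\ref{lem:resBound}, namely $\|A_u y_u - q\| \le \|\nabla(f\circ A^{-1}_u)(q)\|/\lambda$, by the nonnegative scalar $\|A^{-1}_u\|$. Chaining the two bounds yields the claimed two-sided estimate.

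There is essentially no obstacle here: the only points requiring a word of justification are the invertibility of $A_u$ (which is a standing assumption in this section) and the applicability of Lemma~\ref{lem:resBound} (whose hypotheses are the same standing assumptions). The corollary is recorded purely because the quantity $\|y_u - A^{-1}_u q\|$ — the discrepancy between the partial minimizer and the exact state solving $A_u y = q$ — is the object that will matter later, and it is convenient to have its $\mathcal{O}(1/\lambda)$ decay stated explicitly.
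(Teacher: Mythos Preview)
Your argument is correct and matches the paper's intent: the paper simply labels this a ``direct corollary'' of Lemma~\ref{lem:resBound} without writing out a proof, and your two steps (factor through $A_u^{-1}$ and apply submultiplicativity, then invoke the lemma) are exactly the implied derivation.
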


Next we will compute the Hessian of $\varphi(u,y)$, and use it to show that the norm of the Hessian of $\widetilde \varphi$ is bounded by a constant independent of $\lambda$. Defining 
\[
R(u,y,v) = \nabla_u \left[ G(u,y)^Tv \right] ~\textrm{ and }~ K(u,v) = \nabla_u\left[A_u^Tv\right],
\]
we can partition the Hessian as follows:
\[
\nabla^2 \varphi = 
\begin{bmatrix}
\varphi_{uu} & \varphi_{uy} \\
\varphi_{yu} & \varphi_{yy}
\end{bmatrix}
\]
where 
\[
\begin{aligned}
\varphi_{uu}(u, y) & = \lambda \left(G(u,y)^T G(u,y) + R(u, y, A_u y-q) \right), \\
\varphi_{yy}(u,y) & = \nabla^2 f (y) + \lambda A_u^TA_u, \\
\varphi_{yu}(u,y) & = \lambda \left(K(u, A_u y - q) + A_u^TG(u,y)\right).
\end{aligned}
\]
See \cite[Section 4]{van2015penalty} for more details.
Moreover, it is known that the Hessian of the reduced function $\widetilde \varphi$ admits the expression \cite[Equation 22]{van2015penalty}
\begin{equation}\label{eqn:schur}
\nabla^2 \widetilde\varphi (u)  = \varphi_{uu}(u,y_u) - \varphi_{uy}(u, y_u)\varphi_{yy}(u,y_u)^{-1} \varphi_{yu}(u,y_u),
\end{equation}
which is simply the Schur complement of $\varphi_{yy}(u,y_u)$ in $\nabla^2 \varphi(u,y_u)$.
We define the operator norms
\[
\begin{aligned}
\left\| \nabla_u G(u,y)^T \right\| &:= \sup_{\|v\|\leq 1} \left \| \nabla_u \left[ G(u,y)^Tv \right] \right\|, \\
\left\| \nabla_u A_u^T \right\| &:= \sup_{\|v\|\leq 1} \left \| \nabla_u \left[ A_u^Tv \right] \right\|.
\end{aligned}
\]
Using this notation, we can prove the following key bounds.

\begin{corollary}\label{lem:key_ineq}
%The following quantities are bounded above by $\frac{\|\nabla (f\circ A^{-1}_u)(q)\|}{\lambda}$
%\[
%\|\varphi_{yy} (u,y_u)^{-1}\|, \quad \|K(u, A_uy_u - q)\|, \quad \|R(u, y_u, A_uy_u - q)\|. 
%\]
%\begin{proof}
For any points $u$ and $y$, the inequalities hold:
\begin{align*}
\|\varphi_{yy}(u,y)^{-1}\| &\leq \frac{ \|A_u^{-1}\|^2}{\lambda},\\
\|R(u, y_u, A_u y_u - q)\| &\leq \frac{\|\nabla (f\circ A^{-1}_u)(q)\|\left\|\nabla_uG(u,y_u)\right\|}{\lambda},\\
\|K(u, A_u y_u - q)\| &\leq \frac{\|\nabla (f\circ A^{-1}_u)(q)\|\left\| \nabla_u  A_u^T\right\|}{\lambda} .
\end{align*}
%\end{proof}
\end{corollary}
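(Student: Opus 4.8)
The plan is to handle the three inequalities separately; each is a short consequence of the structural formulas already derived above, together with the residual bound of Lemma~\ref{lem:resBound}.

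For the first inequality I would start from $\varphi_{yy}(u,y) = \nabla^2 f(y) + \lambda A_u^TA_u$. Convexity of $f$ gives $\nabla^2 f(y)\succeq 0$, hence the operator inequality $\varphi_{yy}(u,y)\succeq \lambda A_u^TA_u$, and inverting it yields $\varphi_{yy}(u,y)^{-1}\preceq \tfrac{1}{\lambda}(A_u^TA_u)^{-1}$. Taking operator norms and using $\|(A_u^TA_u)^{-1}\| = \|A_u^{-1}(A_u^{-1})^T\| = \|A_u^{-1}\|^2$ completes this part.

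For the second and third inequalities, the key observation is that $v\mapsto G(u,y)^Tv$ and $v\mapsto A_u^Tv$ are linear in $v$, so differentiating in $u$ respects this linearity: $R(u,y,v)$ and $K(u,v)$ are linear in $v$. Then the very definitions of the operator norms $\|\nabla_u G(u,y)^T\|$ and $\|\nabla_u A_u^T\|$ give $\|R(u,y,v)\|\le \|\nabla_u G(u,y)^T\|\,\|v\|$ and $\|K(u,v)\|\le \|\nabla_u A_u^T\|\,\|v\|$ for every $v$. I would then specialize to $y=y_u$ and $v = A_u y_u - q$, and invoke Lemma~\ref{lem:resBound} to replace $\|A_u y_u - q\|$ by $\lambda^{-1}\|\nabla(f\circ A_u^{-1})(q)\|$; this produces exactly the claimed right-hand sides.

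I do not expect a genuine obstacle here — the whole argument is substitution. The only places needing a little care are the operator-norm identity $\|(A_u^TA_u)^{-1}\|=\|A_u^{-1}\|^2$, and making the ``pull $v$ out of the derivative'' step precise (one can either write $R$ and $K$ out via the Jacobians of $u\mapsto G(u,y)^Tv$ and $u\mapsto A_u^Tv$, or simply observe that the bound is immediate from the supremum that defines the operator norms $\|\nabla_u G(u,y)^T\|$ and $\|\nabla_u A_u^T\|$).
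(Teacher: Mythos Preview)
Your proposal is correct and matches the paper's argument. For the first bound the paper writes out the factorization $\varphi_{yy}(u,y)^{-1}=\tfrac{1}{\lambda}A_u^{-1}\bigl(\tfrac{1}{\lambda}A_u^{-T}\nabla^2 f(y)A_u^{-1}+I\bigr)^{-1}A_u^{-T}$ and bounds the middle factor by $1$, which is exactly your L\"owner-order argument in factored form; for the last two bounds the paper simply says they are ``immediate from Lemma~\ref{lem:resBound}'', i.e.\ the substitution you spell out.
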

\begin{proof}
The first bound follows by the inequality 
\[
\begin{aligned}
\|\varphi_{yy}(u,y)^{-1}\| &= \frac{1}{\lambda}\left\|A_u^{-1}\left(\frac{1}{\lambda} A_u^{-T}\nabla^2 f(y)A_u^{-1} + I\right)^{-1}A_u^{-T}\right\| \\
& \leq \frac{ \|A_u^{-1}\|^2}{\lambda},
\end{aligned}
\]
and the remaining bounds are immediate from Lemma~\ref{lem:resBound}.
\end{proof}

Next, we need the following elementary linear algebraic fact. 
%We provide a short proof for completeness.
\begin{lemma}
\label{lem:LA}
For any positive semidefinite matrix $B$ and a real $\lambda >0$, we have 
$\|I - \left(I + \frac{1}{\lambda}B\right)^{-1}\|_2\leq \frac{\|B\|}{\lambda}$.
\end{lemma}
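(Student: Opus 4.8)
The plan is to diagonalize. Since $B$ is positive semidefinite, write its spectral decomposition $B = Q\Lambda Q^T$ with $Q$ orthogonal and $\Lambda = \operatorname{diag}(\mu_1,\dots,\mu_n)$, where each $\mu_i \geq 0$. Then $I + \frac{1}{\lambda}B = Q(I + \frac{1}{\lambda}\Lambda)Q^T$, so the inverse is $Q(I + \frac{1}{\lambda}\Lambda)^{-1}Q^T$, and
\[
I - \left(I + \tfrac{1}{\lambda}B\right)^{-1} = Q\left(I - (I + \tfrac{1}{\lambda}\Lambda)^{-1}\right)Q^T .
\]
Because orthogonal conjugation preserves the spectral norm, the operator norm of the left-hand side equals $\max_i \left|1 - (1 + \mu_i/\lambda)^{-1}\right|$.

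Next I would bound each scalar eigenvalue. For $\mu \geq 0$ and $\lambda > 0$ we have
\[
1 - \frac{1}{1 + \mu/\lambda} = \frac{\mu/\lambda}{1 + \mu/\lambda} \leq \frac{\mu}{\lambda},
\]
since the denominator $1 + \mu/\lambda \geq 1$ and the numerator is nonnegative. Taking the maximum over $i$ gives $\max_i \left(1 - (1+\mu_i/\lambda)^{-1}\right) \leq \frac{\max_i \mu_i}{\lambda} = \frac{\|B\|}{\lambda}$, which is exactly the claimed inequality. (One should note the quantity $1 - (1+\mu_i/\lambda)^{-1}$ is automatically nonnegative, so no absolute value subtlety arises.)

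There is essentially no obstacle here: the only mild point to get right is the justification that the spectral norm of a symmetric matrix equals the largest absolute value of its eigenvalues and is invariant under orthogonal conjugation, both of which are standard. Everything else is a one-line scalar estimate, so the proof is short and self-contained.
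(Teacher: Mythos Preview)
Your proof is correct. The paper, however, argues without diagonalizing: it writes $F = I - (I+\tfrac{1}{\lambda}B)^{-1}$, sets $p = (I+\tfrac{1}{\lambda}B)z$ for an arbitrary $z$, and observes
\[
\|Fz\| = \left\|(I+\tfrac{1}{\lambda}B)^{-1}p - (I+\tfrac{1}{\lambda}B)^{-1}z\right\| \le \|p-z\| = \tfrac{1}{\lambda}\|Bz\|,
\]
using only that $(I+\tfrac{1}{\lambda}B)^{-1}$ is a contraction when $B\succeq 0$. Your spectral approach reduces the estimate to the scalar inequality $\tfrac{t}{1+t}\le t$ and is arguably the most transparent route for a symmetric matrix; the paper's operator-level argument avoids the eigendecomposition entirely and would extend verbatim to any setting where $(I+\tfrac{1}{\lambda}B)^{-1}$ is nonexpansive (e.g., $B$ merely monotone rather than self-adjoint PSD). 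Both proofs are equally short; yours makes the eigenvalue picture explicit, while the paper's makes the contraction property the single ingredient.
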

\begin{proof}
Define the matrix
$
F = I - \left(I + \frac{1}{\lambda}B\right)^{-1}
$
and consider an arbitrary point $z$. Observing the inequality $\|\left(I + \frac{1}{\lambda}B\right)^{-1}\|\leq 1$ and
defining the point
 $p := (I + \frac{1}{\lambda} B)z$, we obtain 
\[
\begin{aligned}
\|Fz\| & = \left\|z - \left(I + \frac{1}{\lambda}B\right)^{-1}z \right\|  \\
& =\left \| \left(I + \frac{1}{\lambda}B\right)^{-1}p  - \left(I + \frac{1}{\lambda}B\right)^{-1}z\right\| \\
&  \leq \left\| \left(I + \frac{1}{\lambda}B\right)^{-1} \right\| \|p-z\| \\ 
& \leq \left\|\frac{1}{\lambda} Bz\right\| \leq  \frac{\|B\|}{\lambda}\|z\|.
\end{aligned}
\]
Since this holds for all $z$, the result follows.
\end{proof}

Putting all the pieces together, we can now prove the main theorem of this section.

\begin{theorem}[Norm of the reduced Hessian]\label{thm:main_result}
The operator norm of $\nabla^2 \widetilde\varphi(u)$ is bounded by a quantity $C(A(\cdot),f,q,u)$ independent of $\lambda$. 
\end{theorem}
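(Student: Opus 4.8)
The plan is to bound each of the three pieces appearing in the Schur-complement expression~\eqref{eqn:schur} for $\nabla^2\widetilde\varphi(u)$ separately, using the tools already assembled. Recall that
\[
\nabla^2\widetilde\varphi(u) = \varphi_{uu}(u,y_u) - \varphi_{uy}(u,y_u)\,\varphi_{yy}(u,y_u)^{-1}\,\varphi_{yu}(u,y_u),
\]
so it suffices to bound $\|\varphi_{uu}(u,y_u)\|$ and $\|\varphi_{uy}(u,y_u)\varphi_{yy}(u,y_u)^{-1}\varphi_{yu}(u,y_u)\|$ by constants independent of $\lambda$, and add them.

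\textbf{Bounding $\varphi_{uu}$.} From the formula $\varphi_{uu}(u,y_u) = \lambda\big(G(u,y_u)^TG(u,y_u) + R(u,y_u,A_uy_u-q)\big)$, the term $\lambda R(u,y_u,A_uy_u-q)$ is controlled directly by Corollary~\ref{lem:key_ineq}: the factor $\lambda$ cancels, leaving $\|\nabla(f\circ A_u^{-1})(q)\|\,\|\nabla_uG(u,y_u)\|$. The remaining term $\lambda\,G(u,y_u)^TG(u,y_u)$ looks dangerous, but it will cancel against the Schur term; so for now I simply keep it symbolically rather than bounding it in isolation.

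\textbf{Bounding the Schur term.} Write $\varphi_{yu}(u,y_u) = \lambda\big(K(u,A_uy_u-q) + A_u^TG(u,y_u)\big)$, and likewise for $\varphi_{uy}$. Expanding the product $\varphi_{uy}\varphi_{yy}^{-1}\varphi_{yu}$ gives $\lambda^2$ times a sum of four terms, each sandwiching $\varphi_{yy}^{-1}$. The key observation is that $\varphi_{yy}(u,y_u)^{-1} = \frac1\lambda A_u^{-1}\big(I + \frac1\lambda A_u^{-T}\nabla^2f(y_u)A_u^{-1}\big)^{-1}A_u^{-T}$, so one power of $\lambda$ is absorbed. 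The dominant cross term is $\lambda^2\, G^TA_u\,\varphi_{yy}^{-1}\,A_u^TG$; substituting the expression for $\varphi_{yy}^{-1}$ collapses this to $\lambda\, G^T\big(I + \frac1\lambda A_u^{-T}\nabla^2f A_u^{-1}\big)^{-1}G$. Subtracting this from $\lambda\,G^TG$ in $\varphi_{uu}$ produces exactly $\lambda\,G^T\Big(I - \big(I + \frac1\lambda A_u^{-T}\nabla^2f(y_u)A_u^{-1}\big)^{-1}\Big)G$, and now Lemma~\ref{lem:LA} applied with $B = A_u^{-T}\nabla^2f(y_u)A_u^{-1}$ kills the last power of $\lambda$, bounding this by $\|G(u,y_u)\|^2\,\|A_u^{-1}\|^2\,\|\nabla^2f(y_u)\|$. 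The three remaining cross terms each carry at least one factor of $K(u,A_uy_u-q)$, whose norm is $\mathcal{O}(1/\lambda)$ by Corollary~\ref{lem:key_ineq}; combined with the $1/\lambda$ from $\varphi_{yy}^{-1}$ and the explicit $\lambda^2$, these are all $\mathcal{O}(1/\lambda)$ or smaller, hence bounded. Assembling everything yields an explicit $C(A(\cdot),f,q,u)$ built from $\|A_u^{-1}\|$, $\|\nabla^2f(y_u)\|$, $\|G(u,y_u)\|$, $\|\nabla_uG(u,y_u)\|$, $\|\nabla_uA_u^T\|$, and $\|\nabla(f\circ A_u^{-1})(q)\|$.

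\textbf{Main obstacle.} The real subtlety is the algebraic cancellation between the $\lambda\,G^TG$ term in $\varphi_{uu}$ and the leading piece of the Schur correction: both are individually $\Theta(\lambda)$, and only their difference is $\mathcal{O}(1)$. Getting this bookkeeping exactly right — in particular tracking the $\big(I + \frac1\lambda B\big)^{-1}$ factor through the substitution so that Lemma~\ref{lem:LA} applies cleanly — is where care is needed; the remaining terms are routine once the residual bound of Lemma~\ref{lem:resBound} and Corollary~\ref{lem:key_ineq} are invoked.
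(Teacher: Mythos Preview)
Your proposal is correct and follows essentially the same route as the paper: expand the Schur complement, use Corollary~\ref{lem:key_ineq} to control the terms involving $R$ and $K$, and invoke Lemma~\ref{lem:LA} for the critical $\lambda\,G^TG - \lambda^2\,G^TA_u\,\varphi_{yy}^{-1}A_u^TG$ cancellation. One small arithmetic slip: the two mixed cross terms $\lambda^2\,K^T\varphi_{yy}^{-1}A_u^TG$ and its transpose are $\mathcal{O}(1)$, not $\mathcal{O}(1/\lambda)$, since each carries only one factor of $K$---but this is harmless for the conclusion, and the paper's own bound~\eqref{eqn:basic_bound} records exactly these $\mathcal{O}(1)$ contributions.
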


\begin{proof}	
To simplify the proof, define $G:=G(u,y_u)$, $R:= R(u,y_u,A_uy-q)$, $K:= K(u, A_uy_u - q)$, 
and $\Delta = \varphi_{yy}(u,y_u)$. 
When $\lambda \leq 1$, the operator norm of $\nabla^2 \widetilde \phi$ 
has a trivial bound directly from equation~\eqref{eqn:schur}.
For large $\lambda$, 
after rearranging \eqref{eqn:schur}, we can write $\nabla^2 \widetilde\varphi$ as follows:
\begin{equation}
\label{eq:hessian_a}
\begin{aligned}
%\nabla^2 \widetilde\varphi (u) 
& \lambda R - \lambda^2 \left(K^T\Delta^{-1}K
+  K^T\Delta^{-1} A_u^TG + G^TA_u \Delta^{-1} K \right) \\
& + \lambda G^T G - \lambda^2 G^T A_u\Delta^{-1}A_u^TG.
\end{aligned}
\end{equation}
Corollary~\ref{lem:key_ineq} implies that  
the operator norm of the first row of~\eqref{eq:hessian_a} is bounded above by the quantity
\begin{equation}\label{eqn:basic_bound}
\begin{aligned}
L_u\|\nabla_u G\|&+\frac{1}{\lambda}L_u^2\|\nabla_u A_u^T\|^2\|A_u^{-1}\|^2 \\
&+2 L_u\|\nabla_u A_u^T\|\|A_u^{-1}\|^2\|A_u^TG\|,
\end{aligned}
\end{equation}
where we set $L_u:=\|\nabla (f\circ A_u^{-1})(q)\|$.
Notice that the expression in \eqref{eqn:basic_bound} is independent of $\lambda$. We rewrite the second row of~\eqref{eq:hessian_a} using
the explicit expression for $\Delta$: 
\[
\begin{aligned}
 \lambda G^T G& - \lambda^2 G^T A_u\Delta^{-1}A_u^TG \\
% &= \lambda G^T G - \lambda G^TA_u\left(A_u^{-1}\left(\frac{1}{\lambda} A_u^{-T}\nabla^2 f(y_u)A_u^{-1} + I\right)^{-1}A_u^{-T}\right)A_u^TG \\
&= \lambda \left( G^T G - G^T\left(\frac{1}{\lambda} A_u^{-T}\nabla^2 f(y_u)A_u^{-1} + I\right)^{-1}G\right) \\
& = 
\lambda \left( G^T\left( I - \left(\frac{1}{\lambda} A_u^{-T}\nabla^2 f(y_u)A_u^{-1} + I\right)^{-1}\right)G\right).
\end{aligned}
\]
Applying Lemma~\ref{lem:LA} with $B = A_u^{-T}\nabla^2 f(y_u)A_u^{-1}$, we have 
\[
\left\| \lambda G^T G - \lambda^2 G^T A_u\Delta^{-1}A_u^TG \right\| \leq \|G\|^2 \|A_u^{-T}\nabla^2 f(y_u)A_u^{-1}\|.
\]
Setting
\[
\begin{aligned}
C(A(\cdot),f,q,u) &:= L_u\|\nabla_u G\|+ 2 L_u\|\nabla_u A_u^T\|\|A_u^{-1}\|^2\|A_u^TG\| \\
& + \|G\|^2 \|A_u^{-1}\|^2\|\nabla^2 f(y_u)\| \\ 
& + \frac{1}{\lambda}L_u^2\|\nabla_u A_u^T\|^2\|A_u^{-1}\|^2.
\end{aligned}
\]
For $\lambda > 1$, the last term is always trivially bounded by $L_u^2\|\nabla_u A_u^T\|^2\|A_u^{-1}\|^2$ and the result follows. 

\end{proof}

\subsection{Inexact analysis of the projection subproblem}
 In practice, one can rarely evaluate $\widetilde{\varphi}(u)$ exactly. It is therefore important to understand how inexact solutions of the inner problems \eqref{eqn:subprob_eff} impact iteration complexity of the outer problem. The results presented in the previous section form the foundation for such an analysis. 
For simplicity,  we assume that $g$ is smooth, though the results can be generalized, as we comment on shortly. 

In this section, we compute the overall complexity of the partial minimization technique when the outer nonconvex minimization problem \eqref{eqn:reduced} is solved by an inexact gradient descent algorithm. When $g$ is nonsmooth, a completely analogous analysis applies to the prox-gradient method.
We only focus here on gradient descent, as opposed to more sophisticated methods, since the analysis is straightforward. We expect quasi-Newton methods and limited memory variants to exhibit exactly the same behavior (e.g. Figure~\ref{fig:boundaryComp}). We do not perform a similar analysis here for inexact quasi-Newton methods, as the global efficiency estimates even for exact quasi-Newton methods for nonconvex problems are poorly understood.

Define the function $H(u):=g(u)+\widetilde\varphi(u)$. Let $\beta>0$ be the Lipschitz constant of the gradient $\nabla H=\nabla g+\nabla \widetilde\varphi$. Fix a constant $c >0$, and suppose that in each iteration $k$, we compute a vector $v_k$ with $\|v_k-\nabla H(u_k)\|\leq \frac{c}{k}$. Consider then the inexact gradient descent method $u_{k+1}=u_k-\frac{1}{\beta}v_k$. Then we deduce
\begin{equation}
\label{eq:inequality}
\begin{aligned}
H(u_{k+1})-H(u_k)&\leq - \langle\nabla H(u_k),\beta^{-1}v_k\rangle+\frac{\beta}{2}\|\beta^{-1}v_k\|^2\\
&=\frac{1}{2\beta}\left( \|v_k-\nabla H(u_k)\|^2-\|\nabla H(u_k)\|^2\right).
\end{aligned}
\end{equation}
Hence we obtain the convergence guarantee:
\begin{align*}
\min_{i=1,\ldots,k} &\|\nabla H(u_i)\|^2\leq\frac{1}{k}\sum_{i=1}^k \|\nabla H(u_i)\|^2 \\
&\leq \frac{2\beta \left(H(u_1)-H^*\right)}{k}  +\frac{1}{k}\sum_{i=1}^k \|v_k-\nabla H(u_k)\|^2 \\
&\leq \frac{2\beta \left(H(u_1)-H^*\right)}{k} + \frac{c^{2}\pi^2}{6k}\leq \frac{\beta^2 \left\|u_1-u^*\right\|^2+c^2\pi^2/6}{k}.
\end{align*}
where~\eqref{eq:inequality} is used to go from line 1 to line 2. 
Now, if we compute $\nabla g$ exactly, the question is how many inner iterations are needed to guarantee 
$\|v_k-\nabla H(u_k)\|\leq \frac{c}{k}$. For fixed $u=u_k$, the inner objective is  
\[
\varphi(u_k,y) = f(y) + \frac{\lambda}{2} \|A(u_k)y-q\|^2.
\]
The condition number (ratio of Lipschitz constant of the gradient 
over the strong convexity constant) of $\varphi(u_k,y)$ in $y$
is 
\[
\kappa_k := \frac{1}{\lambda}\mathrm{Lip}(f) \|A(u_k)^{-1}\|^2 + \|A(u_k)\|^2\|A(u_k)^{-1}\|^2.
\] 
Notice that $\kappa_k$ converges to the squared condition number of $A(u_k)$ as $\lambda \uparrow \infty$. Gradient descent on the function $\varphi(u_k,\cdot)$
guarantees
\(
\|y_i - y^*\|^2 \leq \epsilon 
\)
after $\kappa_k \log\left(\frac{\|y_0-y^*\|^2}{\epsilon}\right)$ iterations.
Then we have 
\[
\|\nabla_u \varphi(u_k, y_i) - \nabla_u \varphi(u_k,y^*)\| 
\leq
\lambda\|\nabla_u G(u, y^*)\| \|y_i - y^*\|.
\]
Since we want the left hand side to be bounded by $\frac{c}{k}$, we simply need to ensure
\[
\|y_i - y^*\|^2 \leq \frac{c^2}{k^2 \lambda ^2\|\nabla_u G(u_k,y^*)\|^2}.
\]
Therefore the total number of inner iterations is no larger than 
\[
\kappa_k \log\left(
\frac{\|y_0 - y^*\|^2 \|\nabla_u G(u_k,y^*)\|^2}{c^2}k^2\lambda^2
\right),
\]
which grows very slowly with $k$ and with $\lambda$. In particular, the number of iterations to solve the inner problem scales as $\log(k\lambda )$ to achieve a global 
$\frac{1}{k}$ rate in $\|\nabla H\|^2$. If instead we use a fast-gradient method \cite[Section 2.2]{nest_lect_intro} for minimizing $\varphi(u_k,\cdot)$, we can replace $\kappa_k$ with the much better quantity $\sqrt{\kappa}_k$ throughout.

\section{Numerical Illustrations}\label{sec:num_ill}

In this section, we present two  representative examples of PDE constrained optimization 
(boundary control and optimal transport) and a problem of robust dynamic inference. In each case, we show that practical experience
supports theoretical results from the previous section. In particular, in each numerical 
experiment, we study the convergence behavior of the proposed method 
as $\lambda$ increases. 

\subsection{Boundary control}
%An important example is boundary control, where $u$ represents control parameters that appear in a partial differential %equation, and $g$ measures the deviation between the solution of the PDE and some desired state or output, for example 
%\[
%\min_{y,u} \rho(Py - d) \quad \mbox{s.t.} \quad  A(u)y = q;
%\]
%where $A(u)$ is the differential operator, $q$ is the input, and $y$ is the state. 

In boundary control, the goal is to steer a system towards a desired state by controlling its boundary conditions. Perhaps the simplest example of such a problem is the following. Given a source $q(x)$, defined on a domain $\Omega$, we seek boundary conditions $u$ such that the solution to the Poisson problem
\begin{equation*}
\begin{aligned}
\Delta y &= q\quad \textrm{ for }x\in \Omega \\
y|_{\partial\Omega} &= u
\end{aligned}
\end{equation*}
is close to a desired state $y_d$. Discretizing the PDE yields the system
\[
Ay + Bu = q
%\left(
%\begin{matrix}
%	A & B \\
%	0 & I 
%\end{matrix}
%\right)
%\left(
%\begin{matrix}
%	y_i \\
%	y_b
%\end{matrix}
%\right)
%= 
%\left(
%\begin{matrix}
%	q \\
%	u
%\end{matrix}
%\right),
\]
where $A$ is a discretization of the Laplace operator on the interior of the domain and $B$ couples the interior gridpoints
to the boundary. The corresponding PDE-constrained optimization
problem is given by 
\[
\min_{u,y}~ \textstyle{\frac{1}{2}}\|y - y_d\|_2^2 \qquad \textrm{subject to} \qquad Ay + Bu = q,
\]
whereas the penalty formulation reads
\[
\min_{u,y}~ \textstyle{\frac{1}{2}}\|y - y_d\|_2^2 + \textstyle{\frac{\lambda}{2}} \|Ay + Bu - q \|_2^2.
\]
Since both terms are quadratic in $y$, we can quickly solve for $y$ explicitly.

\subsubsection{Numerical experiments}
In this example, we consider an L-shaped domain with a source $q$ shaped like a Gaussian bell, as depicted in figure \ref{fig:boundary11}. 
Our goal is to get a constant distribution $y_d = 1$ in the entire domain. The solution for $u=1$ is shown in figure \ref{fig:boundary12} (a). To solve the optimization problem we use a steepest-descent method with a fixed step-size, determined from the Lipschitz constant of the gradient. The result of the constrained formulation is shown in figure \ref{fig:boundary12} (b). We see that by adapting the boundary conditions we get a more even distribution. The convergence behavior for various values of $\lambda$ is shown in figure \ref{fig:boundary13} (a). We see that as $\lambda\uparrow\infty$, the behaviour tends towards that of the constrained formulation, as expected. The Lipschitz constant of the gradient (evaluated at the initial point $u$), as a function of $\lambda$ is shown in figure \ref{fig:boundary13} (b); the curve levels off as the theory predicts. 

\begin{figure}[h!]
	\centering
	\begin{tabular}{c}
			\includegraphics[scale=.3]{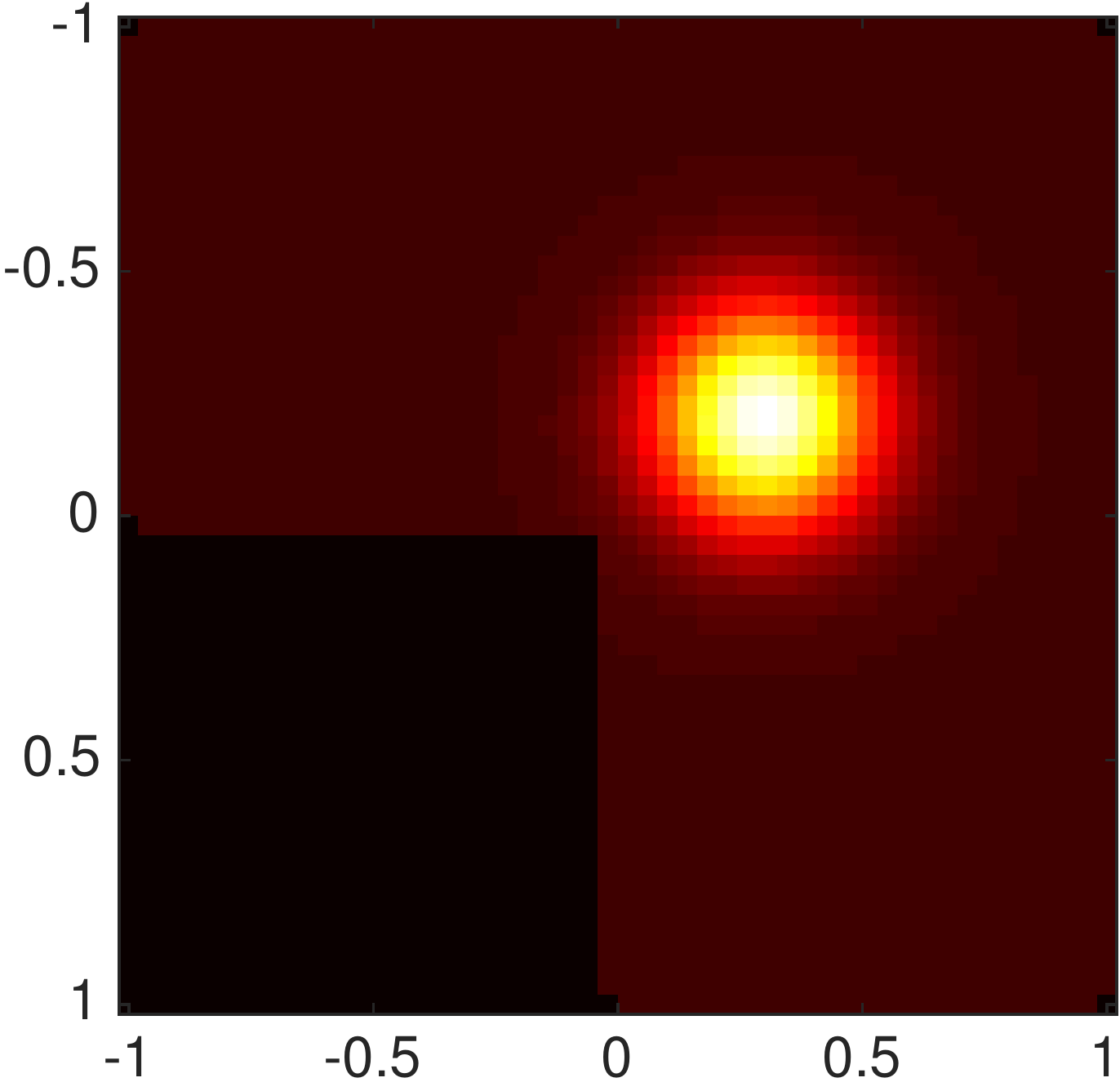}\\
	\end{tabular}
	\caption{L-shaped domain with the source function $q$.}
	\label{fig:boundary11}
\end{figure}

\begin{figure}[h!]
	\centering
	\begin{tabular}{c}
			\includegraphics[scale=.3]{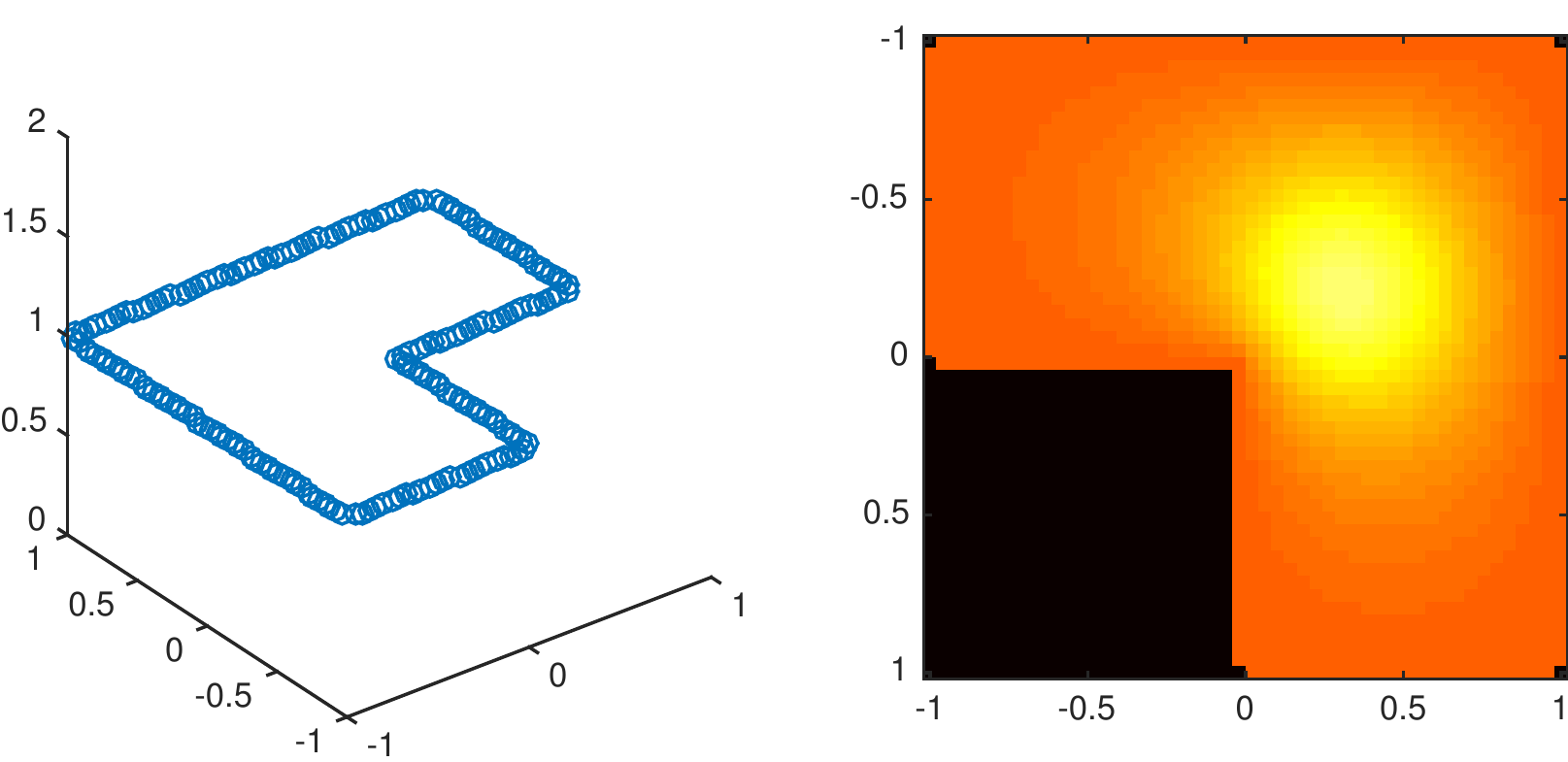}\\
			{\small (a)}\\
			\includegraphics[scale=.3]{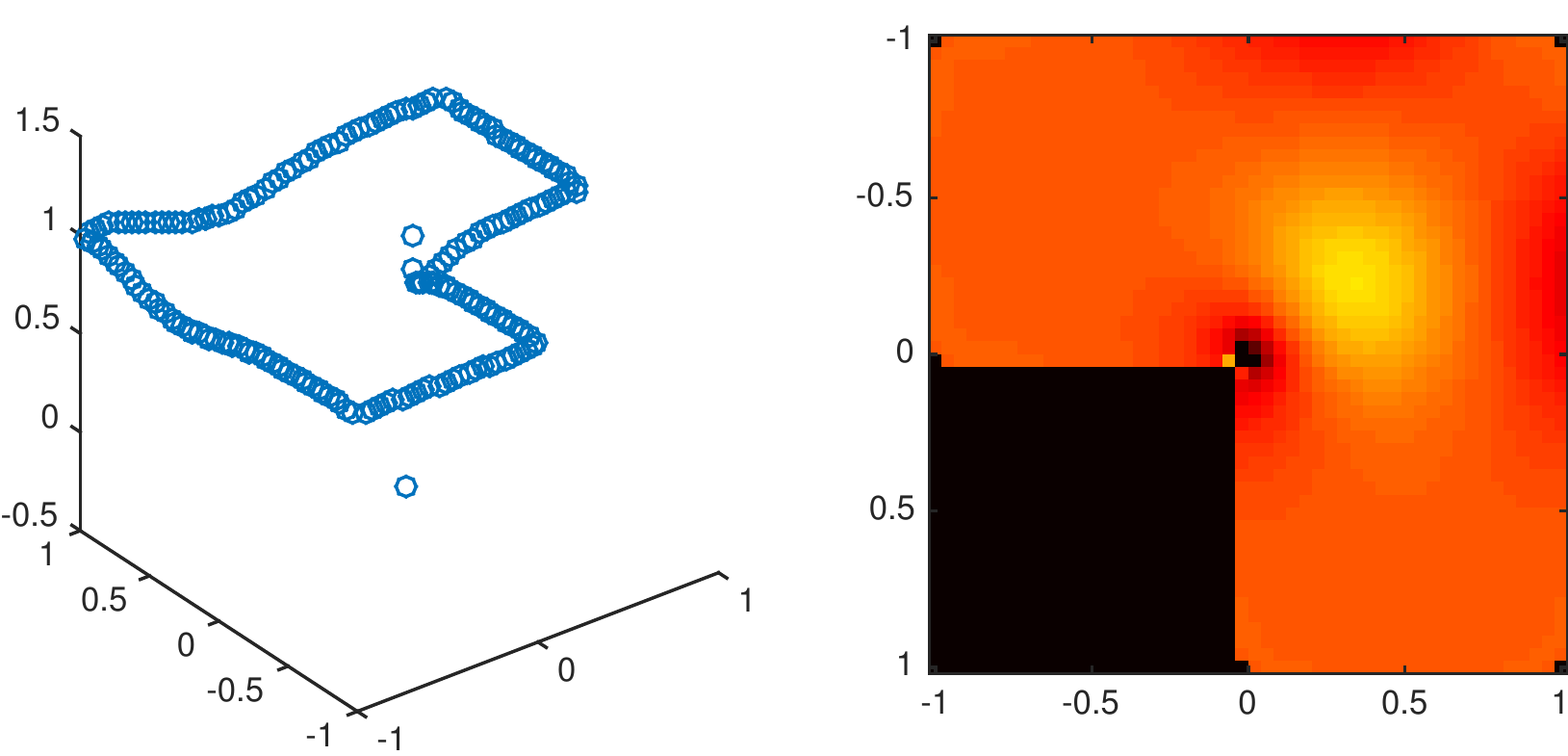}\\
			{\small (b)}\\
	\end{tabular}
	\caption{Boundary values, $u$, and solution in the interior for the initial and optimized boundary values are depicted in (a) and (b) respectively.}
	\label{fig:boundary12}
\end{figure}

\begin{figure}[h!]
\hspace{-.3in}
	\begin{tabular}{ccc}
			\includegraphics[scale=.27]{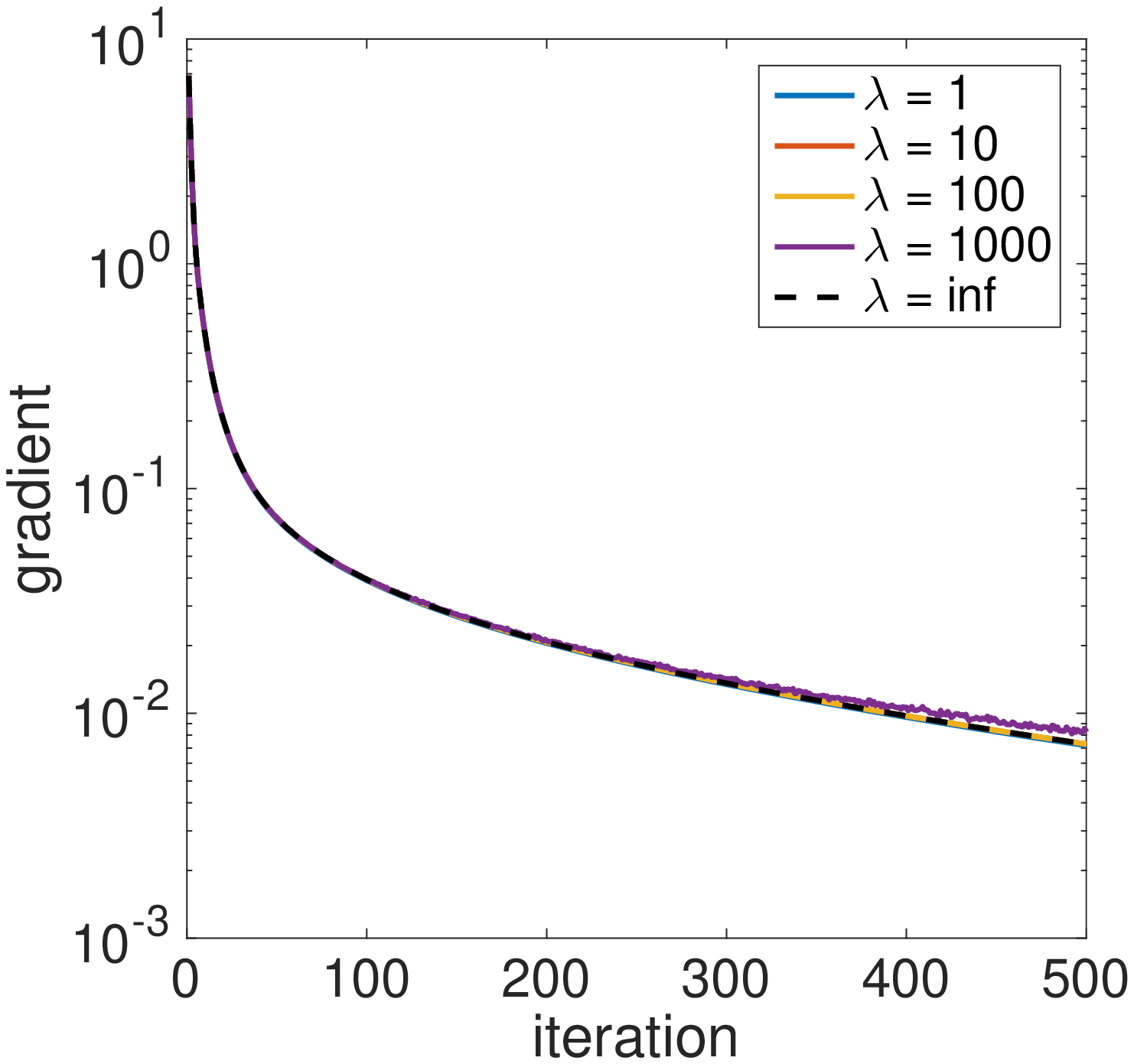}& \hspace{-.6in}
			\includegraphics[scale=.27]{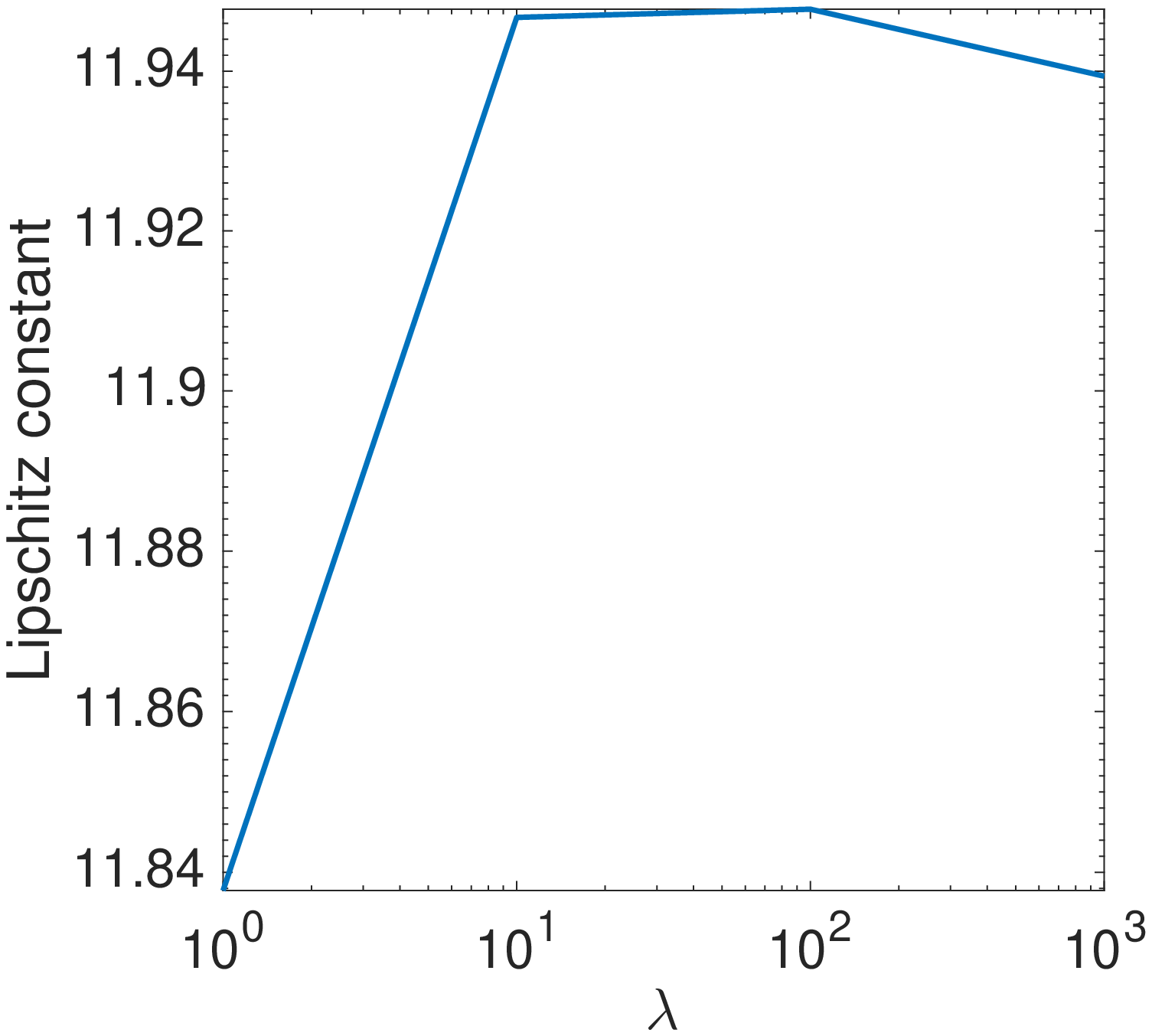}\\
			{\small (a)}&{\small (b)}\\
	\end{tabular}
	\caption{The convergence plots for various values of $\lambda$ are depicted in (a), while (b) shows the dependence of the (numerically computed) Lipschitz constant on $\lambda$. }
	\label{fig:boundary13}
\end{figure}
\subsection{Optimal transport}
\begin{figure}
	\centering
	\includegraphics[scale=.5]{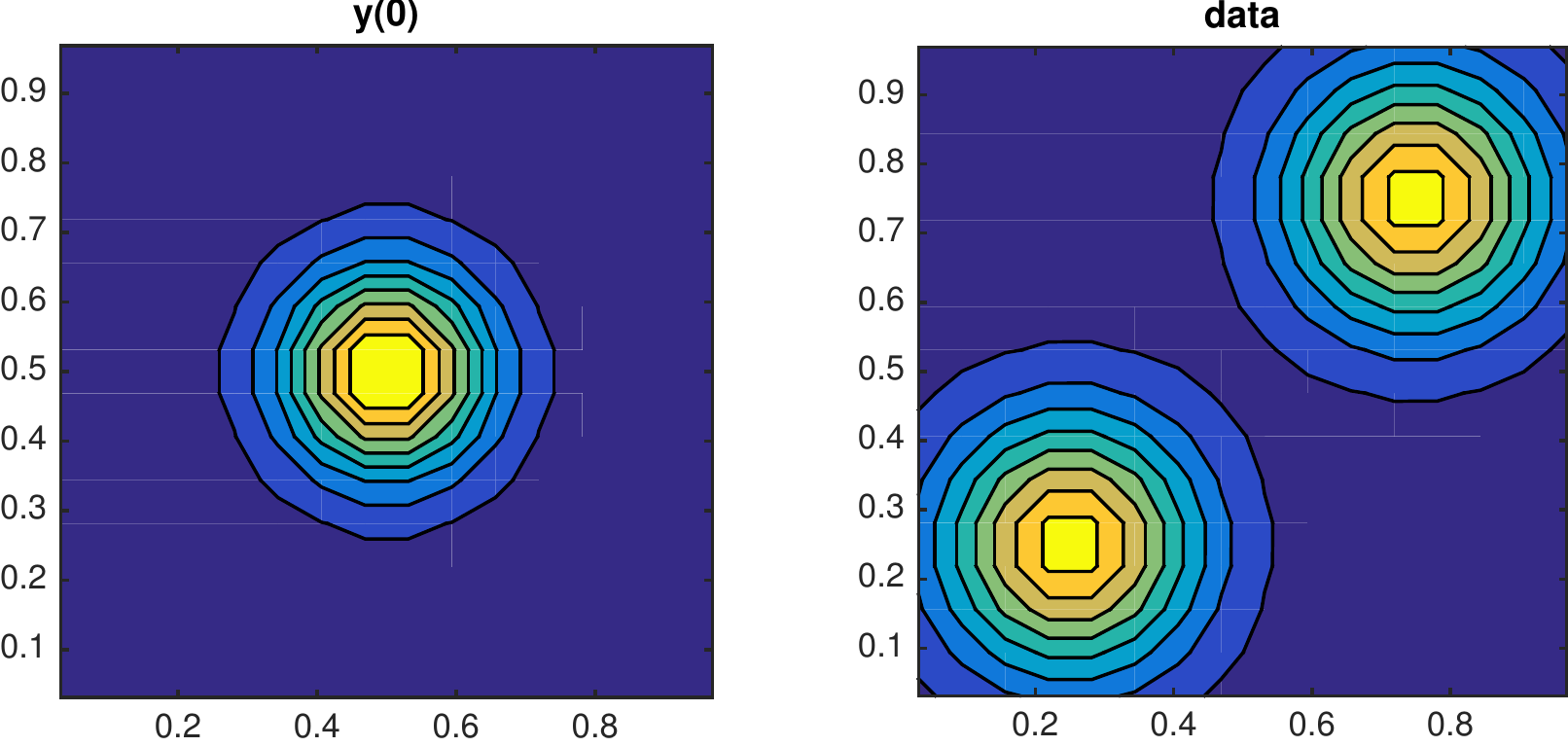}\\
	\caption{The initial (left) and desired (right) mass density are shown.}
	\label{fig:transport_11}
\end{figure}

\begin{figure}
	\centering
	\includegraphics[scale=.5]{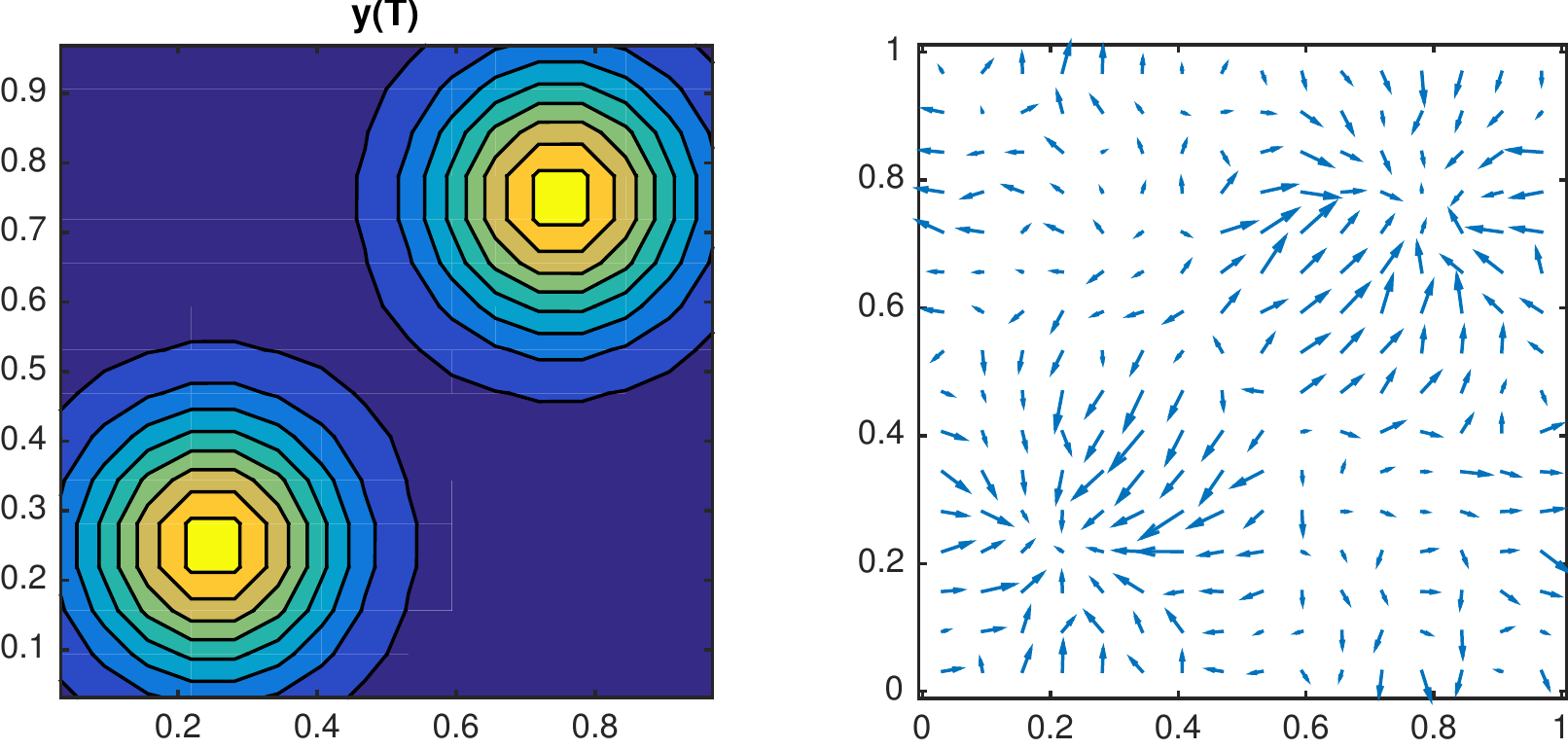}
	\caption{The mass density obtained after optimization (left) and the corresponding time-averaged flow field (right) are shown.}
	\label{fig:transport_12}
\end{figure}

\begin{figure}
\hspace{-.4in}
	\begin{tabular}{cc}
		\includegraphics[scale=.29]{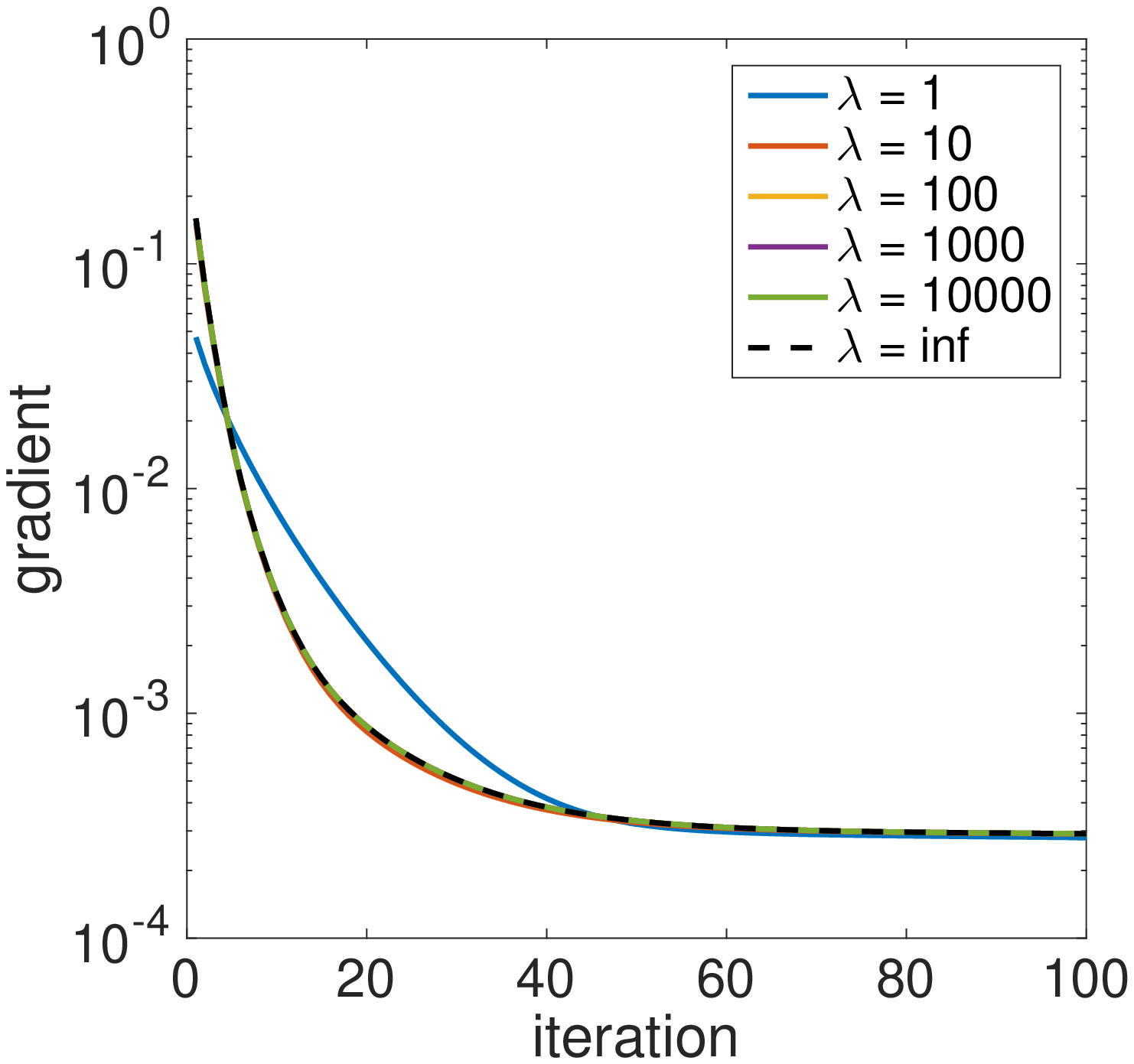}&\hspace{-.6in}
		\includegraphics[scale=.29]{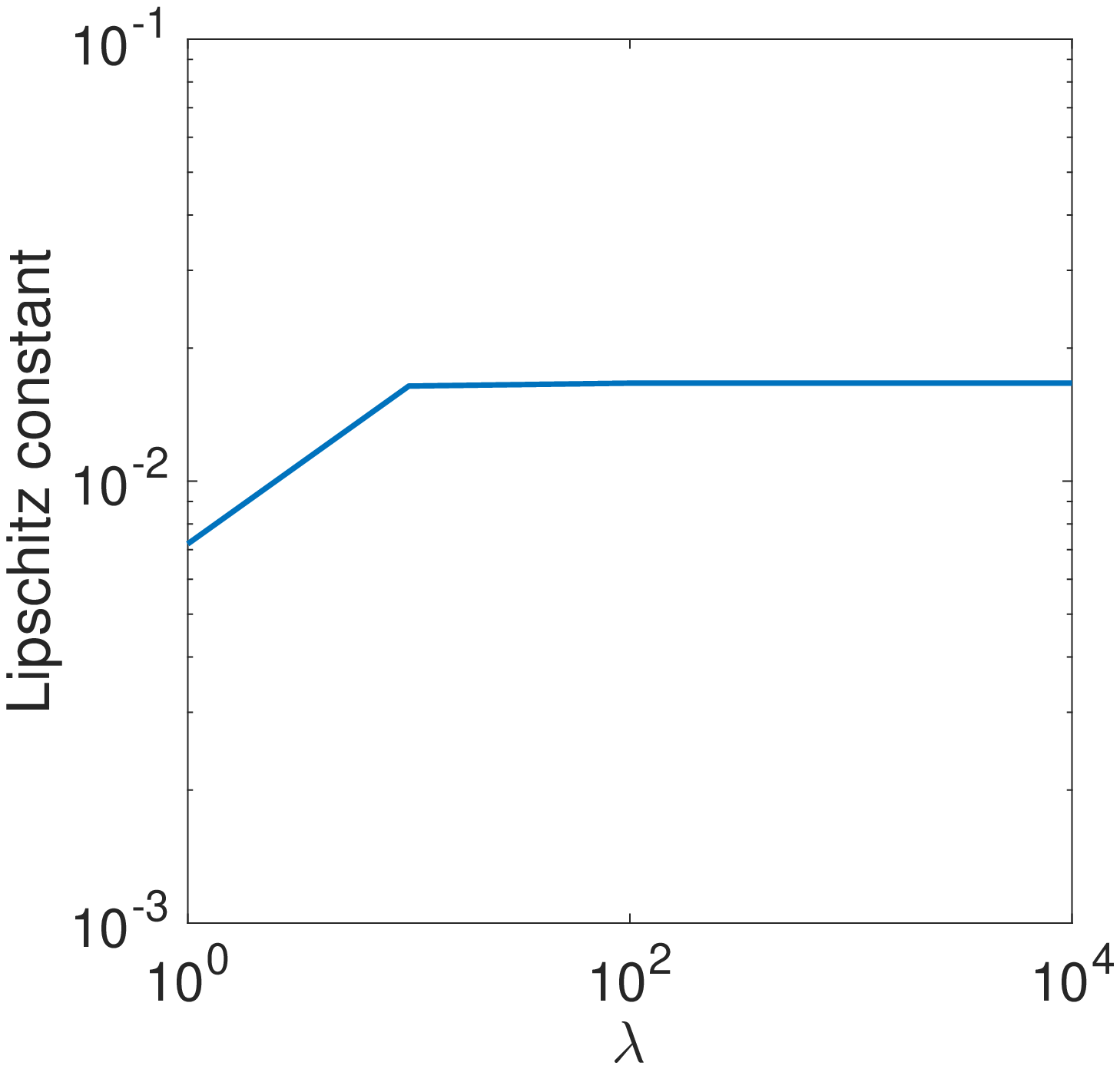}\\
		{\small (a)}&{\small (b)}\\
	\end{tabular}
	\caption{Convergence plots for various values of $\lambda$ are shown in (a), while (b) shows the (numerically computed) Lipschitz constant as a function of $\lambda$.}
	\label{fig:transport_13}
\end{figure}

The second class of PDE-constrained problems we consider comes from optimal transport, where the goal is to determine a mapping, or flow, that optimally transforms one mass density function into another. Say we have two density functions $y_0(x)$ and $y_T(x)$, with $x\in\Omega$, we can formulate the problem as finding a flowfield, $u(t,x) = \left(\begin{matrix} u_1(t,x)\\u_2(t,x) \end{matrix}\right)$, such that $y_T(x) = y(T,x)$ and $y_0(x)  = y(0,x)$, where $y(t,x)$ solves
\[
y_t + \nabla\cdot(y u) = 0.
\]
Discretizing using an implicit Lax-Friedrichs scheme \cite{Haber2007}, the PDE reads
\[
A(u)y = q,
\]
where $q$ contains the initial condition and we have
\[
\small
A(u) = 
\left(
\begin{matrix}
	I + \Delta t B(u^1) &                     &    &                     \\
	-M                  & I + \Delta t B(u^2) &    &                     \\
	                    & -M                  &    &                     \\
						&                     &    &                     \\
						&                     & -M & I + \Delta t B(u^N) \\
\end{matrix}
\right),
\]
with $M$ a four-point averaging matrix and $B$ containing the discretization of the derivative terms.
Adding regularization to promote smoothness of $u$ and $y$ in time~\cite{Haber2007}, we obtain
the problem
\begin{equation}
\label{eq:flowCon}
\begin{aligned}
\min_{u,y}~ \textstyle{\frac{1}{2}} \|Py - y_T\|_2^2 + \textstyle{\frac{\alpha^2}{2}} y^TL\mathsf{diag}(u)u \\
\textrm{subject to}\qquad A(u)y = q.
\end{aligned}
\end{equation}
Here, $P$ restricts the solution $y$ to $t=T$, $\alpha$ is a regularization parameter and $L$ is a block matrix with $I$ 
on the main and upper diagonal.
The penalized formulation is
\begin{equation}
\label{eq:flowConPen}
\min_{u,y}~ \textstyle{\frac{1}{2}} \|Py - y_T\|_2^2 + \textstyle{\frac{\alpha^2}{2}} y^TL\mathsf{diag}(u)u + \frac{1}{2}\lambda \|A(u)y - q\|^2.
\end{equation}
Again the partial minimization in $y$ amount to minimizing a quadratic function. 

\subsubsection{Numerical experiments}
For the numerical example we consider the domain $\Omega = [0,1]^2$, discretized with $\Delta x=1/16$ and $T = 1/32$ 
with a stepsize of $\Delta t = 1/8$. The initial and desired state are depicted in figure \ref{fig:transport_11}. The resulting
 state obtained at time $T$ and the corresponding time-averaged flowfield are depicted in figure \ref{fig:transport_12}. The initial flow $u_0$ was generated by i.i.d. samples from a standard Gaussian random variable. To minimize~\eqref{eq:flowConPen}, we used a steepest-descent method with constant step size, using the largest eigenvalue of the Gauss-Newton Hessian at the initial $u$ as an estimate of the Lipschitz constant. The convergence behavior for various values of $\lambda$ as well as the corresponding estimates of the Lipschitz constant at the final solution are shown in figure \ref{fig:transport_13}.

\begin{figure} 
\centering
%%%%%%%%%%%%%%%%%%%%%
% Densities
%%%%%%%%%%%%%%%%%%%%%
\begin{tikzpicture}
  \begin{axis}[
    thick,
    width=.2\textwidth, height=2cm,
    xmin=-4,xmax=4,ymin=0,ymax=1,
    no markers,
    samples=50,
    axis lines*=left, 
    axis lines*=middle, 
    scale only axis,
    xtick={-1,1},
    xticklabels={},
    ytick={0},
    ] 
%\addplot[red,domain=-2:-1,densely dashed]{-x-.5};
\addplot[domain=-4:+4,densely dashed]{exp(-.5*x^2)/sqrt(2*pi)};

 \addplot[red, domain=-4:-1]{0.35*exp(-(abs(x)-.5))};
 \addplot[red, domain=-1:1]{0.35*exp(-.5*x^2)};
  \addplot[red, domain=1:+4]{0.35*exp(-(abs(x)-.5))};

  \addplot[blue, domain=-4:+4, dotted]{0.3*exp(-.5*ln(1 + x^2))};
%\addplot[red,domain=+1:+2,densely dashed]{x-.5};
%\addplot[blue,mark=*,only marks] coordinates {(-1,.5) (1,.5)};
  \end{axis}
\end{tikzpicture}
%%%%%%%%%%%%%%%%%%%%%
% Penalties
%%%%%%%%%%%%%%%%%%%%%
\begin{tikzpicture}
  \begin{axis}[
    thick,
    width=.2\textwidth, height=2cm,
    xmin=-3,xmax=3,ymin=0,ymax=2,
    no markers,
    samples=50,
    axis lines*=left, 
    axis lines*=middle, 
    scale only axis,
    xtick={-1,1},
    xticklabels={},
    ytick={0},
    ] 
%\addplot[red,domain=-2:-1,densely dashed]{-x-.5};
\addplot[domain=-3:+3,densely dashed]{.5*x^2};
 \addplot[red, domain=-3:-1]{abs(x)-.5};
 \addplot[red, domain=-1:1]{.5*x^2};
  \addplot[red, domain=1:+3]{abs(x)-.5};
  \addplot[blue, domain=-3:+3, dotted]{.5*ln(1 + x^2)};
%\addplot[red,domain=+1:+2,densely dashed]{x-.5};
%\addplot[blue,mark=*,only marks] coordinates {(-1,.5) (1,.5)};
  \end{axis}
\end{tikzpicture}
%%%%%%%%%%%%%%%%%%%%%
% Influence Functions
%%%%%%%%%%%%%%%%%%%%%
%\begin{tikzpicture}
%  \begin{axis}[
%    thick,
%    width=.3\textwidth, height=2cm,
%    xmin=-3,xmax=3,ymin=-2,ymax=2,
%    no markers,
%    samples=50,
%    axis lines*=left, 
%    axis lines*=middle, 
%    scale only axis,
%    xtick={-1,1},
%    xticklabels={},
%    ytick={0},
%    ] 
%%\addplot[red,domain=-2:-1,densely dashed]{-x-.5};
%\addplot[domain=-3:3,densely dashed]{.5*x};
% \addplot[red, domain=-3:-1]{-.5};
% \addplot[red, domain=-1:1]{.5*x};
%  \addplot[red, domain=1:3]{.5};
%\addplot[blue, domain=-3:3, dotted]{x/(1 + x^2)};
%%\addplot[red,domain=+1:+2,densely dashed]{x-.5};
%%\addplot[blue,mark=*,only marks] coordinates {(-1,.5) (1,.5)};
%  \end{axis}
%\end{tikzpicture}
    \caption{\label{GLT-KF}Left: Densities, Gaussian (black dash), Huber (red solid), and Student's t (blue dot). Right:  Negative Log Likelihoods.}
%\caption{
%Densities $\mathbf{p}(\textcolor{blue}{v})$, penalties $-\ln\mathbf{p}(\textcolor{blue}{v})$ ,
 %and influence fns $-\frac{d}{d \textcolor{blue}{v} }\ln\mathbf{p}(\textcolor{blue}{v})$. }
\end{figure}

\subsection{Robust dynamic inference with the penalty method}\label{sec:robust}

In many settings, data is naturally very noisy, and 
a lot of effort must be spent in pre-processing and cleaning before applying
standard inversion techniques. 

To narrow the scope, consider dynamic inference,
where we wish to infer both hidden states  and unknown 
parameters driven by an underlying ODE.  
Recent efforts have focused on developing inference formulations 
that are robust to outliers in the data~\cite{Aravkin2011,Farahmand2011,aravkin2014robust}, 
using convex penalties such as
$\ell_1$, Huber~\cite{Hub} and non-convex penalties such as the Student's t log 
likelihood in place of the least squares penalty.
The goal is to develop formulations and estimators that achieve adequate performance when faced 
with outliers; these may arise either as gross measurement
errors, or real-world events that are not modeled by the dynamics.

Figure~\ref{GLT-KF} shows the probability density functions and  penalties
%and {\it influence functions} (see e.g.~\cite{Mar}) 
corresponding to Gaussian, Huber, and Student's t densities. 
Quadratic tail growth corresponds to extreme decay of the Gaussian density for large 
inputs, and linear growth of the influence of any measurement on the fit. In contrast, 
Huber and Student's t have linear and sublinear tail growth, respectively, 
which ensures every observation has bounded influence. 
%Student's t 
%has the additional feature that the influence of measurements
%with large residuals goes to $0$; this property is equivalent to the 
%sublinear growth of the penalty. 

We focus on the Huber function~\cite{Hub}, since it is both 
$C^1$-smooth and convex. In particular, the function $f(y)$ in~\eqref{eqn:main_stuff} and~\eqref{eqn:subprob_eff}
is chosen to be a composition of the Huber with an observation model.
Note that Huber is not $C^2$, so this case is not immediately captured by the theory 
we propose. However, Huber can be closely approximated by a $C^2$ function~\cite{bube1997hybrid}, 
and then the theory fully applies. 
For our numerical examples, we apply the algorithm developed in this paper directly to the 
Huber formulation.%, since we have an efficient interior point code for it. 

We illustrate robust modeling using a  simple representative example. 
Consider  a 2-dimensional oscillator, governed by the following equations: 
\begin{equation}
\label{eq:KalmanODE}
\begin{bmatrix} 
y_1\\
y_2
\end{bmatrix}'
= \begin{bmatrix}
-2u_1u_2 & -u_1^2 \\
1 & 0 
\end{bmatrix}
\begin{bmatrix} 
y_1\\
y_2
\end{bmatrix}
+
\begin{bmatrix} 
\sin(\omega t) \\
0
\end{bmatrix}
\end{equation}
where we can interpret $u_1$ as the frequency, and $u_2$ is the damping. 
Discretizing in time, we have 
\[
\begin{bmatrix} 
y_1\\
y_2
\end{bmatrix}^{k+1}
 = 
 \begin{bmatrix}
1 -2\Delta t u_1u_2 & - \Delta t u_1^2 \\
\Delta t & 1 
\end{bmatrix}
\begin{bmatrix} 
y_1\\
y_2
\end{bmatrix}^k + 
\Delta t\begin{bmatrix} 
\sin(\omega t_k)\\
0
\end{bmatrix}.
\]
%This problem has an unknown initial condition $\begin{bmatrix} 
%y_1^0\\
%y_2^0
%\end{bmatrix}$, 
%and unknown parameters $(u_1, u_2)$.
We now consider direct observations of the second component, 
\[
z_k = \begin{bmatrix} 0 & 1 \end{bmatrix}\begin{bmatrix} 
y_1\\
y_2
\end{bmatrix}^k  + w_k, \quad w_k \sim N(0, R_k).
\]

We can formulate the joint inference problem on states and measurements as follows: 
\begin{equation}
\label{eq:kalmanODE}
\min_{u,y}~ \phi(u,y) := \frac{1}{2}\rho\left(R^{-1/2}(H y - z)\right) \;\; \mbox{s.t.} \;\; Gy = v,
\end{equation}
with $v_k = \sin(\omega t_k)$, $v_1$ the initial condition for $y$, and $\rho$ is
either the least squares or Huber penalty, and we use the following definitions:
\[
\begin{aligned}
R       & =  \mathrm{diag} ( \{ R_k \} )
\\
H       & = \mathrm{diag} (\{H_k\} )
\end{aligned}\quad 
\begin{aligned}
y       & = \mathrm{vec} ( \{ y_k \} )
\\
z      & = \mathrm{vec} (\{z_1,  \dots, z_N\})\\
G  & = \begin{bmatrix}
    \mathrm{I}  & 0      &          &
    \\
    -G_2   & \mathrm{I}  & \ddots   &
    \\
        & \ddots &  \ddots  & 0
    \\
        &        &   -G_N  & \mathrm{I}
\end{bmatrix}
\end{aligned}
\]
Note in particular that there are only two unknown parameters, i.e. $u \in \mathbb{R}^2$, 
while the state $y$ lies in $\mathbb{R}^{2N}$, with $N$ the number of modeled time points. 

The reduced optimization problem for $u$ is given by 
\[
\min_u f(u) := \frac{1}{2}\rho(R^{-1/2} (HG(u)^{-1}v - z). 
\]
To compute the derivative of the ODE-constrained problem, we can use the adjoint state method. 
Defining the Lagrangian 
\[
\mathcal{L}(y,x,u) = \frac{1}{2}\rho\left(R^{-1/2}(Hy - z)\right) + \langle x, Gy - v\rangle,
\]
we write down the optimality conditions $\nabla \mathcal{L} = 0$ and obtain
\[\left\{
\begin{aligned}
\overline y &= G^{-1}v \\
\overline x & = -G^{-T}(H^TR^{-1/2}\nabla\rho(R^{-1/2}HG^{-1}v - z)) \\
\nabla_u f  & = \left\langle \overline x, \frac{\partial (G(u) \overline y)}{\partial u}\right\rangle
\end{aligned}\right\}.
\]
The inexact (penalized) problem is given by 
\begin{equation}
\label{eq:kalman_pen}
\min_{u,y}~ \varphi(u,y) = \frac{1}{2}\rho\left(R^{-1/2}(H y - z)\right) + \frac{\lambda}{2} \|Gy - v\|^2.
\end{equation}
We then immediately find
\[
\begin{aligned}
\overline y & = \arg\min_{y}\frac{1}{2}\rho\left(R^{-1/2}(H y - z)\right) + \frac{\lambda}{2} \|Gy - v\|^2\\
\nabla   \widetilde \phi(u)& = \lambda \frac{\partial (G(u)\overline y)}{\partial u} (G\overline y - v).
\end{aligned}
\]
When $\rho$ is the least squares penalty, $\overline y$ is available in closed form. 
However, when $\rho$ is the Huber, $\overline y$ requires an iterative algorithm. 
Rather than solving for $\overline y$ using a first-order method, we use IPsolve, an interior point 
method well suited for Huber~\cite{JMLR:v14:aravkin13a}. 
Even though each iteration requires inversions of systems of size $\mathcal{O}(N)$, 
these systems are very sparse, and the complexity of each iteration to compute $\overline y$
is $\mathcal{O}(N)$ for any piecewise linear quadratic function~\cite{JMLR:v14:aravkin13a}. Once again, we see that the computational cost does not scale with $\lambda$.

\subsubsection{Numerical Experiments} We simulate a data contamination scenario by solving 
the ODE~\eqref{eq:KalmanODE} for the particular parameter value $u = (2, 0.1)$.
The second component of the resulting state $y$ is observed, and the observations are contaminated. 
In particular, in addition to Gaussian noise with standard deviation $\sigma = 0.1$, 
for 10\% of the measurements uniformly distributed errors in $[0,2]$ are added. 
The state $y \in \mathbb{R}^{2(4000)}$ is finely sampled over 40 periods. 

For the least squares and the Huber penalty with $\kappa = 0.1$,
we solved both the ODE constrained problem~\eqref{eq:kalmanODE}
and the penalized version~\eqref{eq:kalman_pen} for $\lambda \in \{10^3, 10^5, 10^7, 10^9\}$. 
The results are presented in Table~\ref{tab:results}. The Huber formulation behaves analogously to 
the formulation using least squares; in particular the outer (projected) function in $u$ is no more difficult to minimize.
And, as expected, the robust Huber penalty finds the correct values for the parameters. 

A state estimate generated from $u$-estimates corresponding to large $\lambda$ is shown in Figure~\ref{fig:kalman_1}. 
The huberized approach is able to ignore the outliers, and recover both better estimates 
of the underlying dynamics parameters $u$, and the true observed and hidden components 
of the state $y$.

\begin{table}
\caption{Results for Kalman experiment. Penalty method for both least squares and huber
achieves the same results for moderate values of $\lambda$ as does the projected 
formulation. While Huber results converge to nearly the true parameters $u$, 
least squares results converge to an incorrect parameter estimate. \label{tab:results}}
\centering
\begin{tabular}{|c|c|c|c|c|}\hline
$\lambda$ & $\rho$ & Iter & Opt & $u$\\\hline
$10^3$ & $\ell_2$ & 9 & $3.1\times 10^{-7}$ & $(.45, .98)$\\ \hline 
$10^5$ & $\ell_2$ & 18 & $3.2\times 10^{-7}$ & $(.15, 4.3)$\\ \hline 
$10^7$ & $\ell_2$ & 26 & $5\times 10^{-5}$ & $(.07, 11.1)$\\ \hline 
$10^9$ & $\ell_2$ & 31 & $4\times 10^{-6}$ & $(.07, 11.8)$\\ \hline 
$\infty$ & $\ell_2$ & 29 & $3.3\times 10^{-7}$ & $(.07, 11.8)$\\ \hline
&&&&\\\hline 
$10^3$ & h & 9 & $2.4\times 10^{-7}$ & $(1.92 .14)$\\ \hline 
$10^5$ & h& 12 & $5\times 10^{-6}$ & $(1.98, .11)$\\ \hline 
$10^7$ & h & 10 & $5\times 10^{-6}$ & $(1.99, .11)$\\ \hline 
$10^9$ & h & 13 & $1\times 10^{-5}$ & $(1.99, .11)$\\ \hline 
$\infty$ & h & 17 & $2\times 10^{-6}$ & $(1.99, .11)$\\ \hline
\end{tabular}
\end{table}

\begin{figure}[h]
%\hspace{-.4in}
\centering
	\includegraphics[scale=.4]{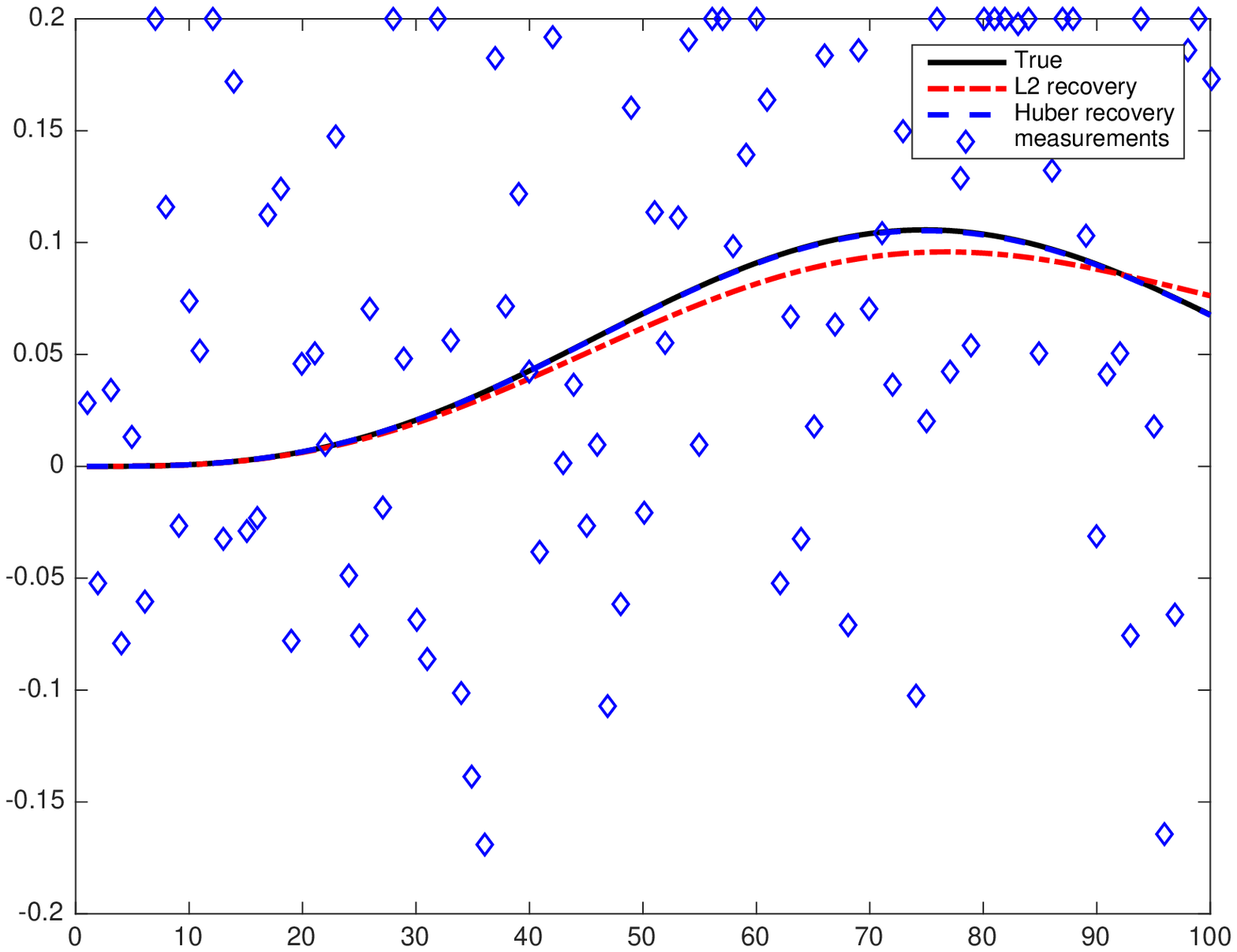}
	\includegraphics[scale=.4]{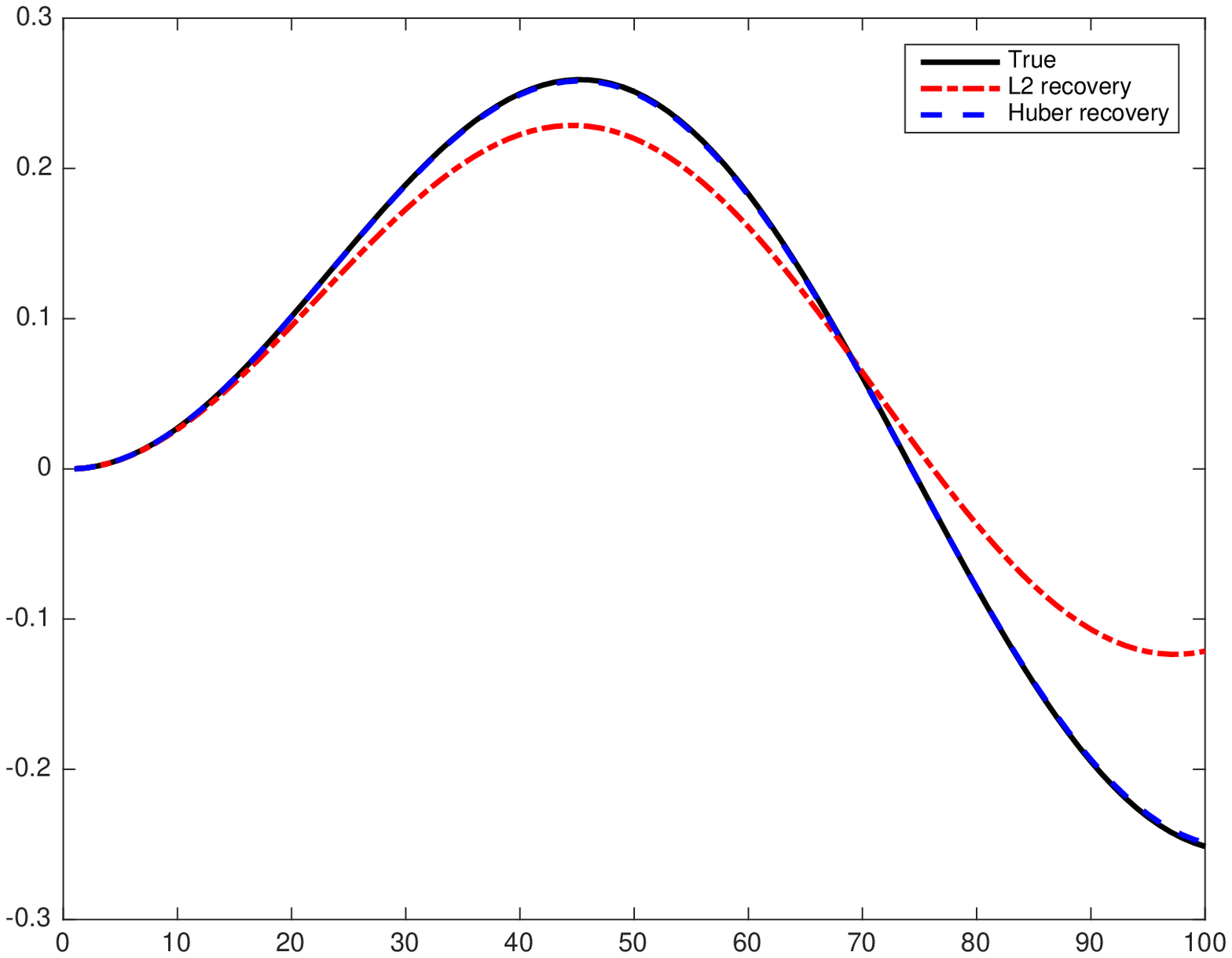}
	\caption{Top panel: first 100 samples of state $y_2$ (solid black), least squares recovery (red dash-dot) and huber recovery (blue dash). Noisy measurements $z$ appear as blue diamonds on the plot, with outliers shown at the top of the panel. Bottom panel: first 100 samples of state $y_1$ (solid black), least squares recovery (red dash-dot) and Huber recovery (blue dash).  }
	\label{fig:kalman_1}
\end{figure}

%\subsubsection{Robust misfit measures}
%
%We can extend the entire framework to use for example huber applied to misfit measures. 
%
%In particular, we can extend it to scaled and unscaled Huber. 
%Scaled huber that converges to the 1-norm as $\kappa \downarrow 0$ is given by 
%\[
%\rho_s(r) = \sup_{u\in [-1,1]} ur - \frac{\kappa}{2}u^2 = 
%\begin{cases}
%\frac{1}{2\kappa} r^2 & \mbox{if} \quad |r| \leq \kappa \\
%|x| - \frac{\kappa}{2} & \mbox{if}\quad |r| > \kappa.
%\end{cases}
%\]
%while the unscaled version that converges to the quadratic for $\kappa \uparrow \infty$ 
%is given by 
%\[
%\rho(r) = \sup_{u\in [-\kappa,\kappa]} ur - \frac{1}{2}u^2 = 
%\begin{cases}
%\frac{1}{2} r^2 & \mbox{if} \quad |r| \leq \kappa \\
%\kappa|x| - \frac{\kappa^2}{2} & \mbox{if}\quad |r| > \kappa.
%\end{cases}
%\]

\section{Conclusions\label{sec:conclusions}}
In this paper, we showed that, contrary to conventional wisdom, the quadratic penalty technique 
can be used effectively for control and PDE constrained optimization problems, if done correctly. 
In particular, when combined with the partial minimization technique, we showed that the penalized projected scheme 
\[
\min_{u}~ g(u) + \min_{y} \left\{ f(y) + \frac{\lambda}{2}\|A(u)y - b\|^2\right\}
\]
has the following advantages: 
\begin{enumerate}
\item The Lipschitz constant of the gradient of the outer function in $u$ is bounded as $\lambda \uparrow \infty$, and hence we can effectively analyze the global convergence of first-order methods. 
\item Convergence behavior of the data-regularized convex inner problem is controlled by parametric matrix
$A(\cdot)$, a fundamental quantity that does not depend on $\lambda$. 
\item The inner problem can be solved inexactly, and in this case, the number of inner iterations (of a first-order algorithm) needs to grow only logarithmically with $\lambda$ and the outer iterations counter, to preserve the natural rate of gradient descent. 
\end{enumerate}

As an immediate application, we extended the penalty method in~\cite{van2015penalty} to convex robust formulations, using the Huber penalty composed with a linear model as the function $f(y)$. Numerical results illustrated the overall approach, including convergence behavior of the penalized projected scheme, as well as modeling advantages of robust penalized formulations. 

\bigskip

\noindent{\bf  Acknowledgment.}
Research of A. Aravkin was partially supported by the Washington Research Foundation Data Science Professorship. 
Research of D. Drusvyatskiy was partially supported by the AFOSR YIP award FA9550-15-1-0237. The research of T. van Leeuwen was in part financially supported by the   Netherlands Organisation of Scientific Research (NWO) as part of research programme 613.009.032.

\bibliographystyle{abbrv}
\bibliography{mybib,references_sasha}

\end{document}